\newtheorem{theorem}{Theorem}[section]
\newtheorem{lemma}[theorem]{Lemma}
\newtheorem{corollary}[theorem]{Corollary}
\newtheorem{proposition}[theorem]{Proposition}
\newtheorem{claim}[theorem]{Claim}
\newtheorem{assumption}{Assumption}
\theoremstyle{definition}
\newtheorem{definition}[theorem]{Definition}
\newtheorem{example}[theorem]{Example}
\theoremstyle{remark}
\newtheorem{remark}[theorem]{Remark}
\numberwithin{equation}{section}
\begin{document}

\title[Roughness of exponential dichotomy in PFDE]{Roughness of exponential dichotomy
\\
under unbounded perturbation in
\\ 
linear partial functional differential equations}
\author[X.-Q. Bui]{Xuan-Quang Bui}
\address{Faculty of Fundamental Sciences, PHENIKAA University, Nguyen Trac Street, Duong Noi Ward, Hanoi 12116, Viet Nam}
\email[corresponding author]{quang.buixuan@phenikaa-uni.edu.vn}

\thanks{The research of X.-Q.~Bui was supported by the Vietnam National Foundation for Science and Technology Development (NAFOSTED) under Grant No. 101.02-2025.56.}
 
\author[N.V. Minh]{Nguyen Van Minh}
\address{Department of Mathematics and Statistics, University of Arkansas at Little Rock, 2801 S University Ave, Little Rock, AR 72204, USA}
\email{mvnguyen1@ualr.edu}
\subjclass[2020]{34G10, 37D10, 34D20, 34C45, 34D09}
\keywords{Yosida distance, 
exponential dichotomy, 
roughness,
perturbation, 
partial functional differential equation}
\begin{abstract}
This paper is concerned with the roughness of exponential dichotomies under unbounded perturbations of a class of linear partial functional differential equations
\begin{equation}\label{pfde-000-1star}
u'(t)=Au(t)+Bu_t,
\end{equation}
where $A$ is a linear operator on a Banach space $\mathbb{X}$ and $B$ is a linear operator from $C([-r,0],\mathbb{X})$ into $\mathbb{X}$, where $r>0$ is a given constant. 
To quantify the size of unbounded perturbations, we introduce the \textit{Yosida distance} between linear operators $U$ and $V$, defined by $d_Y(U,V):=\limsup_{\mu\to +\infty} \| U_\mu-V_\mu\|$, where $U_\mu$ and $V_\mu$ are the Yosida approximations of $U$ and $V$, respectively. 
We show that if $d_Y(A, A_1)$ and $d_Y(B, B_1)$ are sufficiently small, then the perturbed equation
\begin{equation}\label{pfde-000-2star}
u'(t)=A_1u(t)+B_1u_t
\end{equation}
also admits an exponential dichotomy whenever \eqref{pfde-000-1star} admits one. 
The proofs are based on estimates of the Yosida distance between the generators of the solution semigroups associated with \eqref{pfde-000-1star} and \eqref{pfde-000-2star} in the phase space $C([-r,0],\mathbb{X})$, without assuming any relation between their domains.
\end{abstract}
\date{\today}
\maketitle
{
\parskip 0pt
\tableofcontents
}

\section{Introduction}
In this paper, we investigate unbounded perturbations of delayed evolution equations, which serve as abstract models of partial functional differential equations
\begin{equation}\label{PFDE-00}
u'(t)=Au(t)+Bu_t, \quad t \geq 0,
\end{equation}
where $A$ generates a $\mathrm{C}_0$-semigroup in a Banach space $\mathbb{X}$ and $B\in \mathcal{L}(\mathcal{C} ,\mathbb{X})$, where $\mathcal{C} :=C([-r,0],\mathbb{X})$, $r>0$ is a given positive constant.
For $u(\cdot )$ is a function from $[-r,\epsilon)$ to $\mathbb{X}$ with $\epsilon >0$, then for $t\in [0,\epsilon)$, we denote by $u_t\in \mathcal{C}$ the function $u_t(\theta ) :=u(t+\theta )$, $ \theta \in [-r,0]$. As is well known, for reasonable conditions on $A$ and $B$ (see \cite{traweb}) Eq.~\eqref{PFDE-00} generates a $\mathrm{C}_0$-semigroup in $\mathcal{C}$ referred to as the "solution semigroup". 
We refer the reader to the monographs in \cite{hal, wu} for a comprehensive account of the theory and applications of functional differential equations to real-world problems (see also \cite{batpia, diegilverwal, driessezz}). 
The interest in functional differential equations has been growing recently as they are better models for real world processes that are determined by delay factors.

It is well known in the theory of dynamical systems that for a linear system of the form
\[
x'(t)=Ax(t),\quad x(t)\in \mathbb{R}^n
\]
under "small perturbation" some asymptotic behavior like exponential dichotomy persists (see e.g. \cite{cop, dalkre, engnag}, see also \cite{huy2006, huymin, minrabsch}), that is, the perturbed system 
\[
x'(t)=(A+B)x(t)
\]
also has an exponential dichotomy if $B$ is small. In the finite-dimensional case the size of a small perturbation can be determined  by the norm $\| B\|$. 
Many results from finite-dimensional dynamical systems could be extended to the infinite-dimensional case. In this paper, assuming that the evolution equation
\begin{equation}\label{nd}
u'(t)=Au(t),\quad u(t)\in \mathbb{X},
\end{equation}
where $\mathbb{X}$ is a Banach space, generates a $\mathrm{C}_0$-semigroup in $\mathbb{X}$ that has an exponential dichotomy, we consider the perturbed equation
\[
u'(t)=(A+B)u(t),
\]
where $B$ is (generally unbounded) linear operator on $\mathbb{X}$. 

A key difficulty in studying this equation is related to understanding how to measure the size of the perturbation $B$. It is naturally easy to extend the persistence of exponential dichotomy under small perturbation $B$ in the sense of its norm, that is, $\| B\|$ is sufficiently small. A more general class of unbounded perturbation $B$ was discussed in \cite{cholei, pru} (see also \cite[p. 630--631]{dunsch} and \cite{pralun, yangimllv, zholuzha}).

As for delayed equation Eq.~\eqref{PFDE-00}, under reasonable conditions on $A$ and $B$, it generates a $\mathrm{C}_0$-semigroup (called solution semigroup) in the phase space $\mathcal{C}$, so its asymptotic behavior is studied via this semigroup. It is expected that many results of the non-delayed equation \eqref{nd} could be easily extended to the delayed equations \eqref{PFDE-00}. That means, if the unperturbed equation \eqref{PFDE-00} has an exponential dichotomy, then under "small perturbation" $A_1$ and $B_1$, the perturbed equation
\begin{equation}\label{pe}
u'(t)=(A+A_1)u(t)+(B+B_1)u_t 
\end{equation}
also has an exponential dichotomy. Even when $A_1=0$ and $B_1$ is bounded and $\| B_1\|$ is small, the study of the persistence of exponential dichotomy, to the best of our knowledge, has yet to be complete. In fact, the problem is that the Variation-of-Constants Formula in the phase space $\mathcal{C}$ is not available for the solution semigroups associated with Eq.\eqref{PFDE-00} and Eq.~\eqref{pe}. 
It turns out that in the infinite-dimensional case, the familiar form of Variation-of-Constants Formula in the phase space $\mathcal{C}$ is not valid as shown in \cite{hinmurnaimin}. Instead, a weaker version of Variation-of-Constants Formula was proposed. With a weaker Variation-of-Constants Formula, as shown in \cite{adiezzouh, adiezzouh2, adiezzlak, murmin},  some bounded (or nonlinear Lipchitz) perturbation $\|B_1\|$ of the operator $B$ could be studied.

To our knowledge, the persistence of exponential dichotomy of Eq.~\eqref{pe} under unbounded perturbations $A_1$ and $B_1$ has not been studied yet. The idea of applying the method by \cite{cholei} or  \cite[p.~630--631]{dunsch} to study the persistence of exponential dichotomy of the solution semigroup generated by Eq.\eqref{PFDE-00} in the phase space $\mathcal{C}$ is impossible. In fact, the very first condition on the domain for the generator of solution semigroups is not satisfied for delayed equations as shown in our Corollary~\ref{cor 100} and Example~\ref{exa 1}.

In our paper we aim to study the persistence of exponential dichotomy of the solution semigroup generated by Eq.~\eqref{PFDE-00} where both $A$ and/or $B$ are perturbed by unbounded operators $A_1$ and $B_1$ from some classes. We will approach this problem via the concept of Yosida distance as defined below. The idea of using the Yosida distance arises from the Theory of Semigroups of Bounded Operators, where Yosida approximation $A_\lambda$ is used to construct the semigroup generated by a linear equation $u'(t)=Au(t)$. The advantage of using Yosida distance between two unbounded operators is that we can free the domain conditions. Our Lemma~\ref{lem 1} below shows that this concept is a natural extension of the concept of distance between bounded operators to some common classes of unbounded operators, so it is suitable to measure the size of perturbation even in the unbounded case.

Our paper is organized as follows: After Section~\ref{Sect-Pr} where we list some notations to be used in the paper as well as some known results, we present the main results of the paper in Sections~\ref{Sect-YdPe}, \ref{Sect-PePFDE} and \ref{Sect-Pealphanorm}. 
In the last section (Section~\ref{Sect-examples}) we give certain examples illustrating how the paper's results could be applied to partial functional differential equations under unbounded perturbation. 
Our key concept of Yosida distance is given in Definition~\ref{def 1} and Theorem~\ref{the per} on persistence under small perturbation in the sense of Yosida distance. In Lemma~\ref{lem 1} we consider several commonly encountered classes of linear operators and their perturbations. The perturbed equations Eq.~\eqref{pe} with unbounded $A_1$ and bounded $B_1$ are studied in Section~\ref{Sect-PePFDE}, while the case of both unbounded $A_1$ and $B_1$ is studied in Section~\ref{Sect-Pealphanorm}. Example~\ref{Exa1} shows that the domain condition on the perturbation operators is not satisfied in the case of the solution semigroups generated by partial functional differential equations. 
This is a justification for a different approach, the Yosida distance approach, to the perturbation theory of partial functional differential equations is needed. 
The estimates of Yosida distance of the generators of solution semigroups generated by the unperturbed and perturbed equations are given in Lemma~\ref{lem 39} and Lemma~\ref{lem 40} that are the key for us to apply Theorem~\ref{the per}. 
Our main results are stated in Theorems~\ref{the main1} and \ref{the main2}. 
To the best of our knowledge, these results are new. 
Preliminary versions of some results appeared in \cite{buimin}.

Before concluding this section, we would like to make a brief remark on the notion of the so-called \emph{Yosida distance}, which plays a central role throughout this work. 
The concept of the \emph{Yosida distance} was first introduced in \cite{buimin} with the original purpose of studying the roughness of exponential dichotomies and the existence of invariant manifolds for nonlinear evolution equations. 
Since then, this concept has proved to be a useful tool for investigating several problems in the qualitative theory of differential equations and dynamical systems. 
For instance, it provides a natural measure in the study of the stability radius and the dichotomy radius of infinite-dimensional linear systems under unbounded perturbations, see \cite{buiminthu}. 
It also leads to a generation theorem for unbounded autonomous perturbations of $\mathrm{C}_0$-semigroups in \cite{buihuyluomin}, as well as results on the well-posedness of nonautonomous perturbations of $\mathrm{C}_0$-semigroups in \cite{builuomin}. 
As shown in the present work, the Yosida distance can also be applied to the study of unbounded perturbations of linear partial functional differential equations. 
These results confirm the suitability and usefulness of the Yosida distance. Along with previous studies, it emerges as a new tool in the analysis of unbounded perturbations of evolution equations.

\section{Preliminaries}\label{Sect-Pr}

\subsection{Notations}
Throughout this paper, $\mathbb{X}$ and $\mathbb{Y}$ denote Banach spaces with their respective norms. The symbols $\mathbb{R}$ and $\mathbb{C}$ stand for the fields of real and complex numbers, respectively. 
We denote by $\mathcal{L}(\mathbb{X},\mathbb{Y})$ the Banach space of all bounded linear operators from $\mathbb{X}$ to $\mathbb{Y}$. For a linear operator $T$, its domain and range are denoted by $D(T)$ and $R(T)$, respectively. The resolvent set and spectrum of $T$ are denoted by $\rho(T)$ and $\sigma(T)$. 
For $\lambda \in \rho(T)$, we write
$
R(\lambda,T) := (\lambda I - T)^{-1}
$
for the resolvent of $T$ at $\lambda$.  
For a linear operator $P$ on $\mathbb{X}$, we use $\ker(P)$ and $\mathrm{Im}(P)$ to denote its kernel and image, respectively. 
For a complex number $z$, its real and imaginary parts are denoted by $\Re z$ and $\Im z$, respectively.

\subsection{Exponential Dichotomy}
In this subsection, we present some preliminary material on exponential dichotomies. In particular, we recall the relevant definitions and known results, and conclude with a characterization of exponential dichotomy in terms of spectral properties.
\begin{definition}[Exponential dichotomy]
A $C_0$-semigroup $\left (T(t)\right )_{t \ge 0}$ on a Banach space $\mathbb{X}$ is said to admit an \textit{exponential dichotomy} (or to be \textit{hyperbolic}) if there exist a bounded projection $P$ on $\mathbb{X}$ and constants $N \ge 1$, $\alpha > 0$ such that
\begin{enumerate}
\item $T(t) P = P T(t)$, for $t \ge 0$;
\item for each $t \ge 0$, the restriction $T(t)\big|_{\ker(P)}$ is an isomorphism from $\ker(P)$ onto itself, and we define
\[
T(-t) := \left (T(t)\big|_{\ker(P)}\right )^{-1};
\]
\item the following estimates hold 
\begin{align*}
&
\|T(t)x\| \le N e^{-\alpha t} \|x\|,
\quad \text{ for all }
t \ge 0,\quad
x \in \mathrm{Im}(P),
\\
&
\|T(-t) x\| \le N e^{-\alpha t} \|x\|,
\quad \text{ for all }
t \ge 0,\quad
x \in \ker (P).
\end{align*}
\end{enumerate}
\end{definition}

The projection $P$ is called the \textit{dichotomy projection} for the hyperbolic semigroup $\left (T(t)\right )_{t \ge 0}$, and the constants $N$ and $\alpha$ are called \textit{dichotomy constants}.

The following characterization is well known:
\begin{lemma}\label{Lemhyp}
Let $\left (T(t)\right )_{t \ge 0}$ be a $\mathrm{C}_0$-semigroup. Then $\left (T(t)\right )_{t \ge 0}$ admits an exponential dichotomy if and only if $\sigma (T(1)) \cap \{ z\in \mathbb{C}: |z|=1\}=\emptyset$.
\end{lemma}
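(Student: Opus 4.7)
The plan is to prove the two implications separately, with the harder direction being sufficiency via the Riesz spectral projection.

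For necessity, assume $(T(t))_{t\ge 0}$ has an exponential dichotomy with projection $P$, constants $N,\alpha>0$. Since $P$ commutes with $T(1)$, the space splits as $\mathbb{X}=\mathrm{Im}(P)\oplus\ker(P)$, and $T(1)$ restricts to each summand. On $\mathrm{Im}(P)$ the estimate $\|T(1)^n x\|=\|T(n)x\|\le N e^{-\alpha n}\|x\|$ together with Gelfand's spectral radius formula gives $r(T(1)|_{\mathrm{Im}(P)})\le e^{-\alpha}<1$. On $\ker(P)$, $T(1)$ is invertible with $\|T(1)^{-n}y\|=\|T(-n)y\|\le Ne^{-\alpha n}\|y\|$, so $r(T(1)^{-1}|_{\ker(P)})\le e^{-\alpha}$, hence $\sigma(T(1)|_{\ker(P)})\subseteq\{|z|\ge e^{\alpha}\}$. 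Combining, $\sigma(T(1))$ misses the unit circle.

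For sufficiency, assume $\sigma(T(1))\cap\{|z|=1\}=\emptyset$. Split $\sigma(T(1))=\sigma_s\sqcup\sigma_u$ with $\sigma_s\subset\{|z|<1\}$ and $\sigma_u\subset\{|z|>1\}$, and pick a smooth contour $\Gamma$ (e.g.\ the unit circle) separating them. Define the Riesz spectral projection
\[
P:=\frac{1}{2\pi i}\oint_{\Gamma}R(\lambda,T(1))\,d\lambda.
\]
Then $P$ is a bounded projection commuting with $T(1)$, and $T(1)$ restricts to $\mathrm{Im}(P)$ (with spectrum $\sigma_s$) and to $\ker(P)$ (with spectrum $\sigma_u$). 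Because the semigroup law gives $T(t)T(1)=T(1)T(t)$ for every $t\ge 0$, each $T(t)$ commutes with $R(\lambda,T(1))$ on $\rho(T(1))$, and therefore with $P$. This yields property~(1) of Definition~\ref{TM-Definition1.2}.

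Next I would extract the quantitative estimates. From $r(T(1)|_{\mathrm{Im}(P)})<1$ choose $\alpha>0$ with $r(T(1)|_{\mathrm{Im}(P)})<e^{-\alpha}<1$; Gelfand's formula gives a constant $C_1$ with $\|T(1)^n|_{\mathrm{Im}(P)}\|\le C_1 e^{-\alpha n}$. Writing any $t\ge 0$ as $t=n+s$ with $n\in\mathbb{N}$, $s\in[0,1)$, and setting $M:=\sup_{s\in[0,1]}\|T(s)\|$ (finite by the uniform boundedness principle), I obtain $\|T(t)x\|\le MC_1 e^{\alpha}e^{-\alpha t}\|x\|$ for $x\in\mathrm{Im}(P)$. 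Symmetrically, $r((T(1)|_{\ker(P)})^{-1})<e^{-\alpha}$ after possibly shrinking $\alpha$, giving $\|T(1)^{-n}|_{\ker(P)}\|\le C_2 e^{-\alpha n}$. To define $T(-t)$ for non-integer $t$ I would check that for each $t\ge 0$ the restriction $T(t)|_{\ker(P)}$ is a bijection of $\ker(P)$: the invariance $T(t)\ker(P)\subseteq\ker(P)$ follows from $PT(t)=T(t)P$, and bijectivity follows from $T(t)T(1-t)=T(1-t)T(t)=T(1)$ for $t\in[0,1]$ together with invertibility of $T(1)|_{\ker(P)}$; iterating yields the statement for all $t\ge 0$. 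Setting $T(-t):=(T(t)|_{\ker(P)})^{-1}$ and again writing $t=n+s$, the required estimate on $\ker(P)$ follows from $T(-t)=T(-n)T(-s)$ up to adjusting constants.

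The main obstacle is the sufficiency direction: one must carefully pass from the spectral splitting at time $1$ (a purely operator-theoretic fact via Dunford--Riesz calculus) to uniform exponential estimates that hold for all continuous times $t\ge 0$, and one must establish that $T(t)|_{\ker(P)}$ is a genuine isomorphism for non-integer $t$ before the backward orbit $T(-t)$ can be defined. Once these two points are handled, all three clauses of Definition~\ref{TM-Definition1.2} follow.
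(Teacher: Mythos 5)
Your proof is correct and follows essentially the same route as the source the paper cites for this lemma (Engel--Nagel, Theorem V.1.17): necessity via Gelfand's spectral radius formula on the two dichotomy subspaces, sufficiency via the Riesz spectral projection of $T(1)$ together with the commutation $T(t)T(1)=T(1)T(t)$ and the factorization $T(t)T(1-t)=T(1)$ to get invertibility on $\ker(P)$ for non-integer times. The one detail you leave to ``adjusting constants''---that $\sup_{s\in[0,1]}\|T(-s)|_{\ker(P)}\|<\infty$---follows at once from $T(-s)=T(-1)T(1-s)$ on $\ker(P)$, so there is no gap.
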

\begin{proof}
See \cite[1.17 Theorem]{engnag}.
\end{proof}

As a consequence of Lemma~\ref{Lemhyp}, the following result holds.
\begin{lemma}
Let $\left (T(t)\right )_{t \ge 0}$ be a $\mathrm{C}_0$-semigroup that admits an exponential dichotomy. Then $\left (S(t)\right )_{t \ge 0}$ admits an exponential dichotomy provided that $S(1)$ is sufficiently close to $T(1)$.
\end{lemma}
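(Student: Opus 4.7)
The plan is to reduce everything to Lemma~\ref{Lemhyp}: we must show that the hypotheses force $\sigma(S(1))$ to avoid the unit circle once $S(1)$ is close to $T(1)$. Since $(T(t))_{t\ge 0}$ has an exponential dichotomy, Lemma~\ref{Lemhyp} gives $\sigma(T(1)) \cap \{z\in\mathbb{C}:|z|=1\}=\emptyset$, and the unit circle $\Gamma$ is compact. The crucial ingredient will therefore be a standard upper-semicontinuity argument for the spectrum under operator-norm perturbation, tailored to the compact curve $\Gamma$.

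First, because $\Gamma \subset \rho(T(1))$ and $\lambda \mapsto R(\lambda, T(1))$ is continuous on $\rho(T(1))$, the map is bounded on $\Gamma$. Set
\[
M := \sup_{\lambda \in \Gamma} \|R(\lambda, T(1))\| < \infty.
\]
Next, I would exploit the Neumann-series identity
\[
\lambda I - S(1) = \bigl(\lambda I - T(1)\bigr)\bigl[I - R(\lambda, T(1))(S(1)-T(1))\bigr].
\]
If $\|S(1)-T(1)\| < 1/M$, then for every $\lambda \in \Gamma$ the operator $I - R(\lambda,T(1))(S(1)-T(1))$ is invertible by the geometric series, and hence $\lambda \in \rho(S(1))$ with
\[
R(\lambda,S(1)) = \bigl[I - R(\lambda,T(1))(S(1)-T(1))\bigr]^{-1} R(\lambda,T(1)).
\]
This shows $\Gamma \subset \rho(S(1))$, i.e.\ $\sigma(S(1)) \cap \Gamma = \emptyset$.

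Finally, applying Lemma~\ref{Lemhyp} in the converse direction to the $C_0$-semigroup $(S(t))_{t\ge 0}$ yields that $(S(t))_{t\ge 0}$ itself has an exponential dichotomy, as desired. The only nontrivial point is the uniform boundedness of $R(\cdot,T(1))$ on $\Gamma$, but this is immediate from the continuity of the resolvent together with the compactness of $\Gamma$; everything else is the routine Neumann-series perturbation of the resolvent.
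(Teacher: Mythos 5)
Your proof is correct and takes exactly the route the paper intends: the paper gives no written proof, only the remark that the lemma follows easily from Lemma~\ref{Lemhyp}, and your Neumann-series argument showing that $\sigma(S(1))$ still avoids the unit circle when $\|S(1)-T(1)\|<1/\sup_{\lambda\in\Gamma}\|R(\lambda,T(1))\|$ is precisely the standard filling-in of that claim. No gaps.
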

\subsection{Fractional Powers of Closed Operators}
In this subsection, we present some results on fractional powers of closed linear operators.

\begin{definition}[Sectorial operator, see {\cite[Definition 1.3.1]{hen}}]
A linear operator $A$ on a Banach space $\mathbb{X}$ is said to be \textit{sectorial} if it is closed and densely defined, and there exist constants $\phi \in \left(0,\frac{\pi}{2}\right)$, $M \ge 1$, and $a \in \mathbb{R}$ such that the sector
\[
S_{a, \phi} := 
\left \{
\lambda : \phi \leq |\arg (\lambda - a)| \leq \pi,\, \lambda \neq a
\right \}
\]
is contained in the resolvent set of $A$ and
\[
\left \| (\lambda - A)^{-1}\right \| \leq \frac{M}{|\lambda - a|},
\quad \text{ for all } \lambda \in S_{a, \phi}.
\]
\end{definition}

If $-A$ is the generator of an analytic semigroup we will use the fractional powers as defined in \cite[1.4, p. 24]{hen} and \cite[2.6]{paz}. To be specific, we assume that $0\le \alpha \le 1$ we will denote $\mathbb{X}^\alpha := D(A^\alpha_1)$, where $A_1:=A+aI$ with $a$ chosen so that $\Re \sigma (A_1)>0$ with  the graph norm $\| x\|_\alpha$ defined as
\[
\| x\|_\alpha := \| A^\alpha _1 x\|,\quad x\in \mathbb{X}^\alpha.
\]
Since different $a$ will determine equivalent different norms we can fix a number $a$. 
As is well known (see \cite[Theorem 1.4.8]{hen}) if $-A$ is the generator of an analytic semigroup, then $\mathbb{X}^\alpha$ with the norm defined above is a Banach space. 
Moreover, by \cite[Theorem 1.4.3]{hen}, \cite[Theorem 6.13, p. 74]{paz}, we have
\begin{theorem}
Let $-A$ be the infinitesimal generator of an analytic semigroup $\left (T(t)\right )_{t\ge 0}$. If $\Re \sigma (A) > \delta$, then, 
\begin{enumerate}
\item $T(t):\mathbb{X} \to D(A^\alpha)$ for every $t>0$ and $\alpha \ge 0$.
\item For every $x\in D(A^\alpha )$ we have $T(t)A^\alpha x=A^\alpha T(t)x$;
\item For every $t>0$ the operator $A^\alpha T(t)$ is bounded and
\[
\| A^\alpha T(t)\| \le M_\alpha t^{-\alpha} e^{-\delta t}.
\]
\item Let $0<\alpha \le 1$ and $x\in D(A^\alpha )$, then
\[
\| T(t)x-x\| \le C_\alpha t^\alpha \| A^\alpha x\| .
\]
\end{enumerate}
\end{theorem}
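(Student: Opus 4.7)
The plan is to reduce all four statements to two fundamental facts about an analytic semigroup $(T(t))_{t\ge 0}$ generated by $-A$ with $\Re\sigma(A) > \delta$: the smoothing property $T(t)\mathbb{X}\subseteq D(A)$ for every $t>0$, and the basic estimate $\|AT(t)\|\le M_1 t^{-1} e^{-\delta t}$, which one obtains by representing $T(t)$ as a Dunford integral $\frac{1}{2\pi i}\int_\Gamma e^{\lambda t} R(\lambda,-A)\,d\lambda$ along a suitable sectorial contour, differentiating, and using the exponential decay coming from $\Re\sigma(A)>\delta$. The fractional powers themselves I would take in the Balakrishnan form
\[
A^{-\alpha}=\frac{1}{\Gamma(\alpha)}\int_0^\infty t^{\alpha-1}T(t)\,dt,\qquad \alpha>0,
\]
which converges in operator norm for the same reason, with $A^\alpha$ defined as its inverse on $R(A^{-\alpha})=D(A^\alpha)$.

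For statement (1), I would iterate $T(t)=T(t/n)^n$: each factor sends $\mathbb{X}$ into $D(A)$, so the composition sends $\mathbb{X}$ into $D(A^n)$, and the nesting $D(A^n)\subseteq D(A^\alpha)$ for $n\ge\alpha$ yields the claim for every $\alpha\ge 0$. Statement (2) follows at once from the displayed integral formula: every $T(s)$ commutes with each $T(t)$, so $T(t)$ commutes with $A^{-\alpha}$, and passing to the inverse on its range $D(A^\alpha)$ gives the commutation $T(t)A^\alpha=A^\alpha T(t)$ there.

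Statement (3) is the technical core. For $\alpha\in(0,1]$ I would apply the moment (interpolation) inequality $\|A^\alpha y\|\le C\|Ay\|^\alpha\|y\|^{1-\alpha}$, valid for $y\in D(A)$, to $y:=T(t)x$ (which lies in $D(A)$ by (1)), and combine with $\|T(t)\|\le Me^{-\delta t}$ and $\|AT(t)\|\le M_1 t^{-1}e^{-\delta t}$ to get
\[
\|A^\alpha T(t)x\|\le C\bigl(M_1 t^{-1}e^{-\delta t}\bigr)^\alpha\bigl(Me^{-\delta t}\bigr)^{1-\alpha}\|x\|=M_\alpha t^{-\alpha}e^{-\delta t}\|x\|.
\]
For $\alpha>1$, write $\alpha=n+\beta$ with $\beta\in[0,1)$ and split $A^\alpha T(t)=A^\beta T(t/2)\cdot A^n T(t/2)$, where $\|A^n T(s)\|\le M_n s^{-n}e^{-\delta s}$ is obtained by iterating the case $\alpha=1$.

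For statement (4), I would use density of $D(A)$ in $D(A^\alpha)$ under the graph norm $\|\cdot\|_\alpha$. For $x\in D(A)$, integrating $T'(s)x=-AT(s)x$ on $[0,t]$ and using the commutation from (2) together with $A=A^{1-\alpha}A^\alpha$ on $D(A)$ yields
\[
T(t)x-x=-\int_0^t A^{1-\alpha}T(s)(A^\alpha x)\,ds,
\]
and applying (3) with exponent $1-\alpha$ bounds the right-hand side by
\[
M_{1-\alpha}\|A^\alpha x\|\int_0^t s^{\alpha-1}e^{-\delta s}\,ds\le \frac{M_{1-\alpha}}{\alpha}\,t^\alpha\|A^\alpha x\|;
\]
density extends the estimate to all of $D(A^\alpha)$. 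The main obstacle I foresee is careful bookkeeping in (3) for $\alpha>1$ --- one must ensure that $A^n T(t/2)x\in D(A^\beta)$ before applying $A^\beta$, which is exactly the content of (1) --- and, in (4), justifying the Bochner integrability of $s\mapsto A^{1-\alpha}T(s)y$ near $s=0$, where the bound from (3) has the borderline singularity $s^{\alpha-1}$; integrability holds only because $\alpha>0$, and this is essential for the final estimate.
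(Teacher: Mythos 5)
This statement is quoted in the paper as a classical result, with the proof delegated entirely to Henry (Theorem 1.4.3) and Pazy (Theorem 2.6.13); the paper contains no argument of its own to compare against. Your reconstruction is correct and follows the standard textbook route for parts (1), (2) and (4): smoothing via $T(t)=T(t/n)^n$, commutation through the Balakrishnan integral for $A^{-\alpha}$, and the identity $T(t)x-x=-\int_0^t A^{1-\alpha}T(s)A^{\alpha}x\,\mathrm{d}s$ with the bound from (3). The one genuine deviation is in part (3) for $0<\alpha\le 1$: Pazy estimates $A^{\alpha}T(t)=A^{\alpha-1}\bigl(AT(t)\bigr)$ directly through the integral representation $A^{-(1-\alpha)}=\frac{1}{\Gamma(1-\alpha)}\int_0^{\infty}s^{-\alpha}T(s)\,\mathrm{d}s$, which yields $\int_0^{\infty}s^{-\alpha}(t+s)^{-1}e^{-\delta(t+s)}\,\mathrm{d}s\le B(1-\alpha,\alpha)\,t^{-\alpha}e^{-\delta t}$, whereas you invoke the moment inequality $\|A^{\alpha}y\|\le C\|Ay\|^{\alpha}\|y\|^{1-\alpha}$ applied to $y=T(t)x$. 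Both are legitimate and non-circular (the moment inequality is proved independently from the resolvent representation of $A^{\alpha}$), and your route is arguably cleaner once that inequality is granted. Two small points worth making explicit: the convergence of $\int_0^{\infty}t^{\alpha-1}T(t)\,\mathrm{d}t$ and the uniform-in-$t$ form of the bound in (4) require $\delta>0$ (equivalently $\Re\sigma(A)>0$), which is exactly the normalization the paper adopts by replacing $A$ with $A+aI$; and in (4) the integrand $AT(s)x=T(s)Ax$ is actually continuous up to $s=0$ for $x\in D(A)$, so the borderline singularity $s^{\alpha-1}$ you worry about enters only in the estimate, not in the integrability.
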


\begin{theorem}[{see \cite[Theorem 1.4.8]{hen}}]
\label{ThmHenryFower}
If $A$ is sectorial in a Banach space $\mathbb{X}$, then $\mathbb{X}^{\alpha}$ is a Banach space in the norm $\|\cdot\|_{\alpha}$, for $\alpha \geq 0$, and for $\alpha \geq \beta \geq 0$, $\mathbb{X}^{\alpha}$ is a dense subspace of $\mathbb{X}^{\beta}$ with continuous inclusion. 
If $A$ has compact resolvent, the inclusion $\mathbb{X}^{\alpha} \subset \mathbb{X}^{\beta}$ is compact when $\alpha > \beta \geq 0$.

If $A_1$ and $A_2$ are sectorial operators in $\mathbb{X}$ with the same domain and $\Re \sigma (A_j) > 0$, for $i \in \{1, 2\}$, and if $(A_1-A_2)A_1^{-\alpha}$ is a bounded operator for some $\alpha < 1$, then with $\mathbb{X}_j^{\beta}=D(A_j^{\beta})$, $\mathbb{X}_1^{\beta} = \mathbb{X}_2^{\beta} $ with equivalent norms for $0 \leq \beta \leq 1$.
\end{theorem}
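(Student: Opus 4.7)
My plan is to dispatch the three assertions in sequence, all of them leaning on the Balakrishnan integral representation of fractional powers of a sectorial operator whose spectrum has strictly positive real part. After a harmless shift I assume $\Re\sigma(A)>0$, so that for $0<\alpha<1$ the negative power
\begin{equation*}
A^{-\alpha}=\frac{\sin(\pi\alpha)}{\pi}\int_0^\infty s^{-\alpha}(sI+A)^{-1}\,ds
\end{equation*}
is a bounded operator on $\mathbb{X}$, and $A^\alpha$ is its closed, densely defined inverse; general exponents follow by composition with integer powers.

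For the first block of assertions, $(\mathbb{X}^\alpha,\|\cdot\|_\alpha)$ is a Banach space because $A^\alpha$ is closed with bounded inverse, so its graph norm is complete, and the bound $\|x\|\leq\|A^{-\alpha}\|\cdot\|A^\alpha x\|$ makes $\|\cdot\|_\alpha$ equivalent to it. For $\alpha\geq\beta\geq 0$, the identity $A^\beta x=A^{-(\alpha-\beta)}A^\alpha x$ valid on $D(A^\alpha)$, together with boundedness of $A^{-(\alpha-\beta)}$, yields the continuous inclusion $\mathbb{X}^\alpha\hookrightarrow\mathbb{X}^\beta$; density I would obtain by approximating $x\in\mathbb{X}^\beta$ through Yosida-type regularisations $x_n=n^k R(n,-A)^k x\in D(A^k)\subset D(A^\alpha)$ for any integer $k\geq\alpha$, checking $A^\beta x_n\to A^\beta x$ from the commutation of $R(n,-A)$ with $A^\beta$. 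When $A$ has compact resolvent, $A^{-\gamma}$ is compact for every $\gamma>0$ (for $\gamma\geq 1$, factor $A^{-\gamma}=A^{-(\gamma-1)}\cdot A^{-1}$ as bounded times compact; for $0<\gamma<1$, view the Balakrishnan integral as a norm-convergent integral of the compact operators $(sI+A)^{-1}$), so the embedding $\mathbb{X}^\alpha\hookrightarrow\mathbb{X}^\beta$, realised by $A^{-(\alpha-\beta)}$ under the canonical identifications, is compact whenever $\alpha>\beta\geq 0$.

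The hard part is the perturbation statement $\mathbb{X}_1^\beta=\mathbb{X}_2^\beta$ with equivalent norms. My goal is boundedness on $\mathbb{X}$ of both $A_2^\beta A_1^{-\beta}$ and $A_1^\beta A_2^{-\beta}$ for every $\beta\in[0,1]$. The case $\beta=1$ is direct: since $\alpha<1$, $A_1^{\alpha-1}$ is bounded, so
\begin{equation*}
(A_1-A_2)A_1^{-1}=\bigl[(A_1-A_2)A_1^{-\alpha}\bigr]\cdot A_1^{\alpha-1}
\end{equation*}
is bounded, whence $A_2 A_1^{-1}=I-(A_1-A_2)A_1^{-1}$ is bounded, and a symmetric Neumann-series argument yields the boundedness of $A_1 A_2^{-1}$. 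For $0<\beta<1$ I would insert the Balakrishnan representation of $A_2^\beta$ and apply the resolvent identity
\begin{equation*}
(sI+A_2)^{-1}-(sI+A_1)^{-1}=(sI+A_2)^{-1}(A_1-A_2)(sI+A_1)^{-1},
\end{equation*}
using the hypothesis to absorb the unbounded factor $(A_1-A_2)$ via $A_1^{-\alpha}$. The delicate point, and precisely where $\alpha<1$ is indispensable, is to balance integrability against the singular weight $s^{\beta-1}$ near both $s=0$ and $s=\infty$; splitting $\int_0^\infty$ at $s=1$ and invoking the standard sectorial estimate $\|(sI+A_j)^{-1}\|\leq M/(1+s)$ should yield uniform control across $\beta\in(0,1)$, from which the two-sided norm equivalence is read off.
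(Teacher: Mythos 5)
The paper offers no proof of this statement at all --- it is quoted from Henry [Theorem 1.4.8] --- so your attempt can only be measured against the standard argument. Your first paragraph is correct and is essentially that argument: completeness of $\mathbb{X}^\alpha$ from closedness of $A^\alpha$ plus boundedness of $A^{-\alpha}$, the continuous embedding from $A^\beta x=A^{-(\alpha-\beta)}A^\alpha x$, density via the regularizers $n^k(n+A)^{-k}$, and compactness of the embedding from compactness of $A^{-(\alpha-\beta)}$, which you justify correctly both for $\gamma\ge 1$ and, via norm convergence of the Balakrishnan integral of compact resolvents, for $0<\gamma<1$.

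The second part has genuine gaps. (i) The boundedness of $A_1A_2^{-1}$ does not follow from a ``symmetric Neumann-series argument'': nothing forces $\|(A_1-A_2)A_1^{-1}\|<1$, and the hypothesis is not symmetric in $A_1,A_2$. What is true is that $A_2A_1^{-1}$ is a bounded bijection of $\mathbb{X}$ (composition of the bijections $A_1^{-1}:\mathbb{X}\to D$ and $A_2:D\to\mathbb{X}$), so $A_1A_2^{-1}$ is bounded by the inverse mapping theorem. (ii) For $0<\beta<1$ the estimate you outline cannot be closed with only $\|(sI+A_j)^{-1}\|\le M/(1+s)$. If you subtract the Balakrishnan representations of $A_2^\beta$ and $A_1^\beta$ and pull $A_1^{-\beta}$ outside, the resolvent identity gives an integrand of size $s^{\beta}(1+s)^{-1}\|A_1^{\alpha}(s+A_1)^{-1}\|$, i.e. $O(s^{\alpha+\beta-2})$ at infinity, which converges only when $\alpha+\beta<1$ (and would prove the too-strong claim that $A_2^\beta-A_1^\beta$ is bounded). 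To cover all $\beta\le 1$ you must estimate $A_2^\beta A_1^{-\beta}-I$ directly, e.g.
\begin{equation*}
A_2^\beta A_1^{-\beta}-I=\frac{\sin\pi\beta}{\pi}\int_0^\infty \lambda^{-\beta}\,A_2^{\beta}(\lambda+A_2)^{-1}\,(A_2-A_1)A_1^{-\alpha}\,A_1^{\alpha}(\lambda+A_1)^{-1}\,\mathrm{d}\lambda ,
\end{equation*}
and this requires the weighted fractional resolvent bounds $\|A_j^{\gamma}(\lambda+A_j)^{-1}\|\le C(1+\lambda)^{\gamma-1}$ (a consequence of the moment inequality), which you never invoke; with them the integrand is $O(\lambda^{-\beta})$ at $0$ and $O(\lambda^{\alpha-2})$ at infinity, convergent precisely because $\alpha<1$ and $\beta<1$. (iii) You do not address the reverse containment $D(A_2^\beta)\subset D(A_1^\beta)$ for $0<\beta<1$: boundedness of $A_2^\beta A_1^{-\beta}$ gives only one inclusion, and that operator is not known to be surjective until the other inclusion is proved. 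One must run the symmetric integral estimate for $A_1^\beta A_2^{-\beta}-I$, writing $(A_1-A_2)(\lambda+A_2)^{-1}=(A_1-A_2)A_1^{-\alpha}\cdot A_1^{\alpha}(\lambda+A_2)^{-1}$ and controlling $\|A_1^{\alpha}(\lambda+A_2)^{-1}\|\le C(1+\lambda)^{\alpha-1}$ via the moment inequality together with the already-established boundedness of $A_1A_2^{-1}$ from step (i).
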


\subsection{Partial Functional Differential Equations}
Let us consider partial functional differential equations of the form
\[
u'(t)=Au(t)+Bu_t, \quad t \geq 0,
\]
where $A$ generates a $\mathrm{C}_0$-semigroup in a Banach space $\mathbb{X}$ and $B\in \mathcal{L}(\mathcal{C} , \mathbb{X})$. 
In this paper we will use the standard theory on partial functional differential equations developed in \cite{traweb} for Eq.~\eqref{PFDE-00}. For simplicity, we may assume that the $\mathrm{C}_0$-semigroup $\left (T(t)\right )_{t\ge 0}$ generated by $A$ satisfies
\[
\| T(t)\|_{\mathbb{X}} \le e^{\omega t}
\quad \text{ for some fixed $\omega$ and all $t\ge 0$.}
\]
Otherwise, a new equivalent norm of $\mathbb{X}$ could be used for this purpose. When $A$ is the generator of an analytic semigroup, we may consider fractional powers $A^\alpha$ of the operator $A$ with $0<\alpha \le 1$. 
In this case, for simplicity, we will assume that $\sigma (A)$ is in the left half of the complex plane, so the Banach space $\mathbb{X}^\alpha:=D(A^\alpha)$ will be equipped with the norm $\| x\|_\alpha := \| A^\alpha x\|_{\mathbb{X}}$ for all $x\in \mathbb{X}^\alpha$. 
As shown in \cite{traweb2}, if $B:C([-r,0],\mathbb{X}^\alpha)\to\mathbb{X}$ is a bounded linear operator, then Eq.~\eqref{PFDE-00} generates a $\mathrm{C}_0$-semigroup in $\mathcal{C}_\alpha:= C([-r,0],\mathbb{X}^\alpha)$. 
We will use the formula for the generator of the solution semigroup generated by Eq.~\eqref{PFDE-00} from \cite[(3.3), p. 402]{traweb} or \cite[(4.3), p. 136]{traweb2} accordingly. 
We also use the notations and results in \cite{traweb3}.

\section{Yosida Distance and Linear Perturbation of Linear (Non-Delayed) Evolution Equations}
\label{Sect-YdPe}

We begin this section with the concept of Yosida distance between two linear operators. To this end, we recall the concept of Yosida approximation of a linear operator (see e.g. \cite{paz,yos}). Given operator $A$ in a Banach space $\mathbb{X}$ with $\rho(A) \supset [\omega, \infty)$, where $\omega$ is a given number, the \textit{Yosida approximation} $A_\lambda$ is defined as 
\[
A_\lambda := \lambda^2 R(\lambda, A)-\lambda I
\]
for sufficiently large $\lambda$.

\begin{definition}[Yosida distance]\label{def 1}
The \textit{Yosida distance} between two linear operators $A$ and $B$ satisfying $\rho(A) \supset [\omega, \infty)$ and $ \rho(B) \supset [\omega, \infty)$, where $\omega$ is a given number, is defined to be
\[
d_{Y}(A,B):=\limsup_{\mu\to +\infty} \left \| A_\mu-B_\mu\right \| .
\]
\end{definition}

The following theorem is the main result of this subsection on the roughness of exponential dichotomy under Yosida perturbation.
\begin{theorem}\label{the per}
Let $A$ be the generator of a $\mathrm{C}_0$-semigroup which admits an exponential dichotomy. Then, the $\mathrm{C}_0$-semigroup generated by a linear operator $B$ also has an exponential dichotomy, provided that $d_{Y}(A,B)$ is sufficiently small.
\end{theorem}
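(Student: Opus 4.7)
My plan is to reduce the problem to a perturbation of bounded operators by means of the Yosida approximations, apply classical roughness of exponential dichotomy there, and then pass to the limit. Throughout, write $A_\mu := \mu^2 R(\mu,A) - \mu I$ and $B_\mu$ analogously. Two basic facts are in the background: (i) the Hille--Yosida construction gives uniform bounds $\|e^{tA_\mu}\|, \|e^{tB_\mu}\| \leq M e^{\omega t}$ for all large $\mu$, together with strong convergence $e^{tA_\mu} \to T(t)$ and $e^{tB_\mu} \to S(t)$ uniformly on compact $t$-intervals; and (ii) the hypothesis $d_Y(A,B) < \varepsilon$ translates into $\|A_\mu - B_\mu\| < 2\varepsilon$ for all sufficiently large $\mu$. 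By Lemma \ref{Lemhyp} it suffices to verify that $\sigma(S(1)) \cap \{|z|=1\} = \emptyset$.

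The first real step is to show that, for large $\mu$, the bounded-generator semigroup $e^{tA_\mu}$ inherits the exponential dichotomy of $T(\cdot)$ with dichotomy constants uniform in $\mu$. The dichotomy projection $P$ of $T(\cdot)$ commutes with $R(\mu, A)$ and hence with $A_\mu$, so the splitting $\mathbb{X} = P\mathbb{X} \oplus \ker P$ is $A_\mu$-invariant. On $P\mathbb{X}$ the semigroup $T(\cdot)|_{P\mathbb{X}}$ is exponentially stable, $A_\mu|_{P\mathbb{X}}$ is the Yosida approximation of $A|_{P\mathbb{X}}$, and the standard Hille--Yosida bound $\|e^{tA_\mu|_{P\mathbb{X}}}\| \leq N \exp(t\omega_s / (1-\omega_s/\mu))$ with $\omega_s < 0$ gives uniform exponential decay. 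On $\ker P$ the group $T(\cdot)|_{\ker P}$ is exponentially expanding and its backward semigroup has generator $-A|_{\ker P}$; applying the same Hille--Yosida estimate to this backward generator controls $\|e^{-tA_\mu|_{\ker P}}\|$ uniformly in $\mu$. Together these estimates deliver uniform exponential dichotomy for $e^{tA_\mu}$ with projection $P$.

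Next, since $A_\mu$ is bounded, $e^{tA_\mu}$ has exponential dichotomy with uniform constants, and $\|A_\mu - B_\mu\| < 2\varepsilon$, classical roughness of exponential dichotomy under bounded perturbation (applicable in full generality to bounded generators, via Neumann series on the associated Green's operator) yields that $e^{tB_\mu}$ also has exponential dichotomy with uniform constants $N', \alpha'$ and with a dichotomy projection $P_\mu^B$ uniformly bounded in $\mu$. In fact $P_\mu^B$ is the Riesz projection of $e^{B_\mu}$ associated with the spectrum inside the open unit disc, expressible as a contour integral over $\{|z|=1\}$, and the contour integral is uniformly convergent because the resolvents $R(z, e^{B_\mu})$ are uniformly bounded on $\{|z|=1\}$ by the dichotomy.

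Finally, one passes to the limit $\mu \to \infty$. Strong convergence $e^{tB_\mu} \to S(t)$ together with uniform boundedness of the resolvents $R(z, e^{B_\mu})$ on $\{|z|=1\}$ forces, after standard resolvent manipulations, that $\{|z|=1\} \subset \rho(S(1))$ with $R(z, S(1))$ obtained as a strong limit; the Riesz projections $P_\mu^B$ converge strongly to a projection $P_\infty$ commuting with $S(\cdot)$, and the uniform dichotomy bounds pass to the strong limit. Applying Lemma \ref{lem 3} (or the direct spectral-gap conclusion from Lemma \ref{Lemhyp}) finishes the proof. The main obstacle I anticipate is precisely this last step: only strong convergence of the Yosida semigroups is available, so norm closeness of $e^{A_\mu}$ to $T(1)$ is generally false, and the transfer of the spectral gap to the limit must rely on the uniform resolvent bound from Step 3 rather than on approximating $T(1)$ by $e^{A_\mu}$ in operator norm.
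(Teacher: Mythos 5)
Your overall strategy---establish a dichotomy for the bounded Yosida approximants $e^{tA_\mu}$ with constants uniform in $\mu$, perturb, and pass to the limit---breaks down at the very first step, and the failure is structural, not technical. The claim that $e^{tA_\mu}\big|_{\ker P}$ has a uniformly bounded inverse semigroup is false in general. Your argument for it conflates two different operators: the Yosida approximation $(-A)_\mu=\mu^2(\mu+A)^{-1}-\mu I$ of the backward generator $-A\big|_{\ker P}$ (to which the Hille--Yosida estimate does apply) and the negative $-(A_\mu)=\mu I-\mu^2 R(\mu,A)$ of the Yosida approximation (which is what actually governs $e^{-tA_\mu}$). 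These do not coincide, and the second one behaves badly: since $\sigma(A_\mu)$ is contained in a disk of radius $O(\mu)$ centered near $-\mu$, the high-frequency part of the unstable spectrum of $A$ is ``flipped'' across the imaginary axis by the approximation. Concretely, take $A=\mathrm{diag}(1+in)_{n\in\mathbb{Z}}$ on $\ell^2(\mathbb{Z})$, so $\|T(t)\|=e^{t}$ and $T$ has an exponential dichotomy with $P=0$. Then $A_\mu$ has eigenvalues $\mu(1+in)/(\mu-1-in)$ with real part $\mu(\mu-1-n^2)/\bigl((\mu-1)^2+n^2\bigr)$, which is negative for $n^2>\mu-1$ and $O(1/\mu)$ for $n^2\approx\mu-1$. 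Hence $e^{tA_\mu}$ has no exponential dichotomy with projection $P=0$, the resolvents $R(z,e^{A_\mu})$ blow up like $\mu$ on the unit circle, and the uniform bounds your Steps 2 and 3 rely on (uniform roughness constants, uniformly convergent Riesz integrals for $P_\mu^B$) are unavailable.

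The obstacle you flagged at the end---that only strong convergence $e^{tA_\mu}\to T(t)$ holds, so norm closeness to $T(1)$ cannot be obtained this way---is real, but the paper sidesteps both it and the problem above by never asking the approximants to carry a dichotomy. Its proof estimates the \emph{difference} of the approximant semigroups in operator norm via the Variation-of-Constants Formula, $\|e^{tA_\lambda}-e^{tB_\lambda}\|\le tM^2e^{4\omega t}\|A_\lambda-B_\lambda\|$, uniformly in $\lambda$; letting $\lambda\to\infty$ pointwise then gives $\|T(1)-S(1)\|\le M^2e^{4\omega}\,d_Y(A,B)$ directly, and Lemma~\ref{lem 3} finishes. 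That is the idea your proposal is missing: the spectral information is transferred through norm closeness of the limit semigroups at $t=1$, not through spectral properties of the approximants.
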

\begin{proof}
First, we assume that both semigroups generated by $A$ and $B$ are contraction $\mathrm{C}_0$-semigroups. Then, 
by \cite[Lemma~3.4]{paz}, $\left (e^{tA_\lambda} \right )_{t \geq 0}$ is a $\mathrm{C}_0$-semigroup of contractions. 

Let $C$ and $D$ be two bounded linear operators in a Banach space $\mathbb{X}$. We will estimate the growth of $e^{tC}-e^{tD}$. By the Variation-of-Constants Formula that is applied to the equation 
\[
x'(t)=Cx(t)+(D-C)x(t)
\]
and by setting $x(t)=e^{tD}x$ we have
\[
x(t)=e^{tC}x+\int\limits^t_0 e^{(t-s)C}(D-C)x(s)\mathrm{d}s.
\]
For each $t\ge 0$, we have
\begin{align*}
\left \|e^{tC}x-e^{tD}x\right \|
&\le
\int\limits^t_0 \left \| e^{(t-s)C}(D-C)e^{sD}x\right \|\mathrm{d}s
\\
&\le
t\| C-D\|  
\left \| e^{tC}\right \|
\left \| e^{tD}\right \|
\| x\|.
\end{align*}
Therefore,
\begin{equation*}
\left \| e^{tA_\lambda}-e^{tB_\lambda }\right \|
\le 
t 
\left \| A_\lambda-B_\lambda \right \|
\left \| e^{tA_\lambda }\right \| 
\left \|e^{tB_\lambda }\right \|.
\end{equation*}
Now we assume that $\left (T(t)\right )_{t\ge 0}$ and $\left (S(t)\right )_{t\ge 0}$ are the $\mathrm{C}_0$-semigroups generated by $A$ and $B$ that satisfy 
\[
\| T(t)\| \le Me^{\omega t},
\quad
\| S(t)\| \le Me^{\omega t}
\]
for certain positive numbers $M$ and $\omega$. 
As is well known, for the Yosida approximation $A_\lambda$ of the generator $A$ of a $\mathrm{C}_0$-semigroup $\left (T(t)\right )_{t \geq 0}$ that satistifies $\|T(t)\| \le Me^{\omega t}$ the following estimate of the growth is valid (see e.g. \cite[(5.25)]{paz})
\[
\left \| e^{tA_\lambda } \right \|
\le  Me^{2\omega t},
\quad
\left \| e^{tB_\lambda }\right \|
\le  Me^{2\omega t}.
\]
Hence, we have 
\[
\left \| e^{tA_\lambda}-e^{tB_\lambda }\right \|
\le 
t M^2
\left \| A_\lambda-B_\lambda \right \| e^{4 t\omega }.
\]
By \cite[Theorem 5.5]{paz}, for each $x\in \mathbb{X}$, we have
\begin{align*}
\|T(t)x-S(t)x\| 
&=
\lim_{\lambda \to\infty} 
\left \| e^{tA_\lambda }x-e^{tB_\lambda }x\right \|  
\\
& \le
tM^2e^{4\omega t} \limsup_{\lambda \to\infty } 
\left \| A_\lambda  - B_\lambda \right \|   
\\
&=
tM^2e^{4\omega t} d_{Y}(A,B).
\end{align*}
Finally, if $d_{Y}(A,B)$ is sufficiently small, $\|T(1)-S(1)\|$ is sufficiently small as well, and thus, $\left (S(t)\right )_{t\ge 0}$ has an exponential dichotomy.
\end{proof}

In the perturbation theory of linear equations of the form $u'(t)=Au(t)$ perturbed by an operator $C$, one usually studies the perturbed equation $u'(t)=(A+C)u(t)$. A standard assumption in the literature is the domain inclusion condition $D(A)\subset D(C)$; see, for example, \cite{cholei, dunsch, lun}. 
The main novelty of this work is that such a domain condition is not imposed as a fundamental assumption. Instead, as shown later, the notion of Yosida distance provides a framework that encompasses all classical perturbation settings under this condition. The following lemma clarifies this relationship.

\begin{lemma}\label{lem 1}
The following statements hold:
\begin{enumerate}
\item Let $A,\,B$ be the generators of contraction semigroups with $D(A)= D(B)$. Then, $A=B$, provided that 
$d_{Y}(A,B)=0$.
\item Let $A,\, B\in \mathcal{L}(\mathbb{X})$. Then
\[
d_{Y}(A,B)=\| A-B\|.
\]
\item If $A$ is the generator of a $\mathrm{C}_0$-semigroup $\left (T(t)\right )_{t \geq 0}$ such that $\| T(t)\| \le Me^{\omega t}$, and $C$ is a bounded operator, then $d_{Y}(A,A+C)$ is finite. Moreover,
\[
d_{Y}(A,A+C)\le M^2\|C\| .
\]
\end{enumerate}
\end{lemma}
\begin{proof}\
\begin{enumerate}
\item
From the semigroup theory 
(see \cite[Lemma 1.3.3]{paz}),
if $A$ is the generator of a contraction $\mathrm{C}_0$-semigroup, then
\[
\lim_{\mu\to+\infty} A_\mu x=Ax,
\quad
x\in D(A).
\]
Therefore, for $x\in D(A)=D(B)$, $Ax=Bx$. 

\item Firstly, we have
\begin{align*}
& R(\mu, A) - R(\mu, B)
\\
& =
(\mu - B)
R(\mu, B)
R(\mu, A)
-
R(\mu,B)
R(\mu,  A)
(\mu - A).
\end{align*}
Set
\[
C_{\mu}
:=
R(\mu, B)
R(\mu, A).
\]
Then
\begin{align*}
R(\mu, A) - R(\mu, B)
&= 
(\mu - B)
C_{\mu}
-
C_{\mu}
(\mu - A)
\\
&= 
\mu C_{\mu}
+
B C_{\mu}
-
C_{\mu} \mu
-
C_{\mu} A
\\
&= 
B C_{\mu} - C_{\mu} A.
\end{align*}
Thus,
\[
\mu^2
\|R(\mu, A) - R(\mu, B)\|
=
\mu^2
\left \|
B C_{\mu} - C_{\mu} A
\right \|.
\]
We will show that for a bounded linear operator $A$ in $\mathbb{X}$ the following is valid
\begin{equation}\label{7}
\lim_{\mu\to+\infty} R(\mu,A)=0.
\end{equation}
In fact, for sufficiently large $\mu$, say $\mu >\| A\|$, using the Neuman series, for large $\mu$, we have
\begin{align*}
\| R(\mu,A)\| 
= \frac{1}{\mu} \left\| R\left(1,\frac{1}{\mu}A\right) \right \|
&\le
\frac{1}{\mu} \left\| \sum_{n=0}^\infty \left( \frac{1}{\mu}A\right)^n \right\|  
\\
&\le
\frac{1}{\mu}
\frac{1}{1-  \frac{1}{\mu}\|A\|}.
\end{align*}
This proves \eqref{7}.
Next, by \eqref{7} and the identity
$(\mu-A)R(\mu,A)=I$
we have that
\begin{equation*}
\lim_{\mu\to+\infty} \mu R(\mu,A)=I,
\end{equation*}
so
\begin{equation*}
\lim_{\mu\to+\infty} \mu^2 C_\mu=I.
\end{equation*}
Finally, we have
\begin{align*}
d_{Y}(A,B)
& :=
\limsup_{\mu\to +\infty} 
\mu^2
\left \|
B C_{\mu} - C_{\mu} A\right \|
\\
& =
\|B-A\|,
\end{align*}
and this finishes the proof.

\item By a simple computation we have
\[
R(\mu, A+C)-R(\mu, A) = R(\mu, A+C)CR(\mu,A). 
\]
It is known (see e.g. \cite[Theorem 1.1, p.~76]{paz} that linear operator $A+C$ with $D(A+C)=D(A)$ generates a $\mathrm{C}_0$-semigroup $(S(t))_{t\ge 0}$ satisfying
\[
\| S(t)\| \le Me^{(\omega +M\|C\|)t}, 
\]
so by the Hille--Yosida Theorem
\begin{align*}
& \| R(\mu, A)\| \le \frac{M}{\mu -\omega},
\\
& \| R(\mu,  A+C)\| \le \frac{M}{\mu -(\omega+M\| C\|)}, 
\end{align*}
for certain positive constants $M$ and $\omega$. Therefore,
\begin{align*}
\limsup_{\mu \to \infty}  \mu^2 \| R(\mu, A)-R(\mu, A+C)\| 
& \le
\limsup_{\mu\to\infty } \frac{\mu^2M^2\| C\| }{(\mu -\omega)(\mu -(\omega +M\| C\|)} 
\\
&= M^2\| C\| <\infty. 
\end{align*}
\end{enumerate}

The proof is completed.
\end{proof}

By Lemma~\ref{lem 1}, the quantity $d_Y(\cdot,\cdot)$ extends the usual norm distance between bounded linear operators to a broader class of (possibly unbounded) operators for which the Yosida distance is finite. Examples of such operators are provided in \cite[Section 5]{builuomin}.

\section{Perturbation of Partial Functional Differential Equation $u'(t)=Au(t)+Bu_t$}
\label{Sect-PePFDE}
Consider partial functional differential equation of the form
\begin{equation}\label{315}
u'(t)=A_iu(t)+B_i u_t,
\end{equation}
where $A_i$ generates a $\mathrm{C}_0$-semigroup $\left (T_{i} (t)\right )_{t\ge 0}$ in $\mathbb{X}$, and $B_i$ is a bounded operator from $\mathcal{C}:=C([-1,0],\mathbb{X})$ to $\mathbb{X}$, for $i \in \{0, 1\}$, and 
\[
u_t (\theta ):=u(t+\theta ), \quad \theta \in [-1,0].
\]
By the standard theory of partial functional differential equation, this equation generates a solution semigroup $(\mathcal{T}_i(t))_{t\ge 0}$ that is strongly continuous in the phase space $\mathcal{C}$ with the generator $\mathcal{G}_i$ defined as
(see \cite[p. 402]{traweb})
\begin{align*}
\mathcal{G}_i\phi &= \phi' 
\quad \text{ on the domain} 
\\
D(\mathcal{G}_i)&= \{ \phi \in \mathcal{C}:  \phi(0) \in D(A_i),\, \phi'^- (0)=A_i\phi (0) +B_i\phi \} . 
\end{align*}
The investigation of Eq. \eqref{315} is, therefore, reduced to that of the evolution equation (without delay) 
\[
w'(t)=\mathcal{G}_i w(t)
\]
in the Banach space $\mathcal{C}$.

As shown below, the primary domain condition used in the perturbation theory as  discussed above is the inclusion of the domains of the operators $D(\mathcal{G}_0) \subset D(\mathcal{G}_1)$. We will see that, generally, this never happens to partial functional differential equations. However, the Yosida distance between $\mathcal{G}_0$ and $\mathcal{G}_1$ can be determined. That means, we can still study the perturbation of the system.

Below, for our convenience, we will drop the index $i$ in the above problem to get a general formula for the domain $D(\mathcal{G}_i)$. Suppose that $x\in D(A)$ is an arbitrary element that is fixed.  
Assume that $BD(A) \subset D(A)$. 
We define 
\[
\phi(t) := T(t)x +\int\limits^t_{0} T(t-\xi )Bx \mathrm{d}\xi .
\]
Then, 
\[
\phi (1) = T(1)x +\int\limits^1_{0}T(1-\xi )Bx\mathrm{d}\xi \in D(A).
\]
Moreover, by 
\cite[Theorem 2.4, p.~4]{paz},
\[
(T(t)x)' = AT(t)x 
\]
and 
\[
\frac{\mathrm{d}}{\mathrm{d}t} \left( \int\limits^t_{0} T(t-\xi )Bx\mathrm{d}\xi  \right) = Bx+\int\limits^t_{0} AT(t-\xi )Bx\mathrm{d}\xi.
\]
Therefore,
\begin{align*}
\phi'(1) &= A\left( T(1)x+\int\limits^1_{0} T(1-\xi )Bx\mathrm{d}\xi \right) +Bx \\
&= A\phi (1) +B\phi (0) .
\end{align*}
Consequently, if we set
\[
\varphi (t) = \phi (t+1), \quad t \in [-1,0],
\]
then the following is valid:
\begin{lemma}
Assume that for $i \in \{0, 1\}$, $B_iD(A_i)\subset D(A_i)$. Then, for any $x\in D(A_i)$ the function
\begin{equation}\label{1minh-varphi}
\varphi (t): =  T(t+1)x +\int\limits^{t+1}_{0} T(t+1-\xi )B_ix\mathrm{d}\xi,
\quad
t\in [-1,0] 
\end{equation}
is an element in $D(\mathcal{G}_i)$.
\end{lemma}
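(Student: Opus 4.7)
The plan is to verify the three conditions in the definition of $D(\mathcal{G}_i)$ for the candidate $\varphi$: (a) $\varphi \in \mathcal{C}$, (b) $\varphi(0) \in D(A_i)$, and (c) the compatibility $\varphi'^{-}(0) = A_i \varphi(0) + B_i \varphi$. The computation carried out immediately before the lemma already performs the bulk of (c); what remains is to record it carefully and to discharge (a) and (b) explicitly.

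First I would handle (a). Since $(T(t))_{t \ge 0}$ is strongly continuous and the integrand $\xi \mapsto T(t+1-\xi)B_i x$ is continuous on a compact interval with values in $\mathbb{X}$, the Bochner integral $\int_0^{t+1} T(t+1-\xi)B_i x\,\mathrm{d}\xi$ depends continuously on $t$. Hence $\varphi$ is continuous on $[-1,0]$, so $\varphi \in \mathcal{C}$.

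For (b), write $\varphi(0) = T(1)x + \int_0^1 T(1-\xi)B_i x\,\mathrm{d}\xi$. Because $T(\cdot)$ leaves $D(A_i)$ invariant and $x \in D(A_i)$, we have $T(1)x \in D(A_i)$. The hypothesis $B_i D(A_i) \subset D(A_i)$ gives $B_i x \in D(A_i)$, and so $T(1-\xi) B_i x \in D(A_i)$ for each $\xi \in [0,1]$. Closedness of $A_i$ then permits me to exchange $A_i$ with the Bochner integral and to conclude that $\int_0^1 T(1-\xi)B_i x\,\mathrm{d}\xi \in D(A_i)$, whence $\varphi(0) \in D(A_i)$.

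Finally, for (c), setting $\phi(t) := T(t)x + \int_0^t T(t-\xi)B_i x\,\mathrm{d}\xi$ gives $\varphi(t) = \phi(t+1)$ and $\varphi'^{-}(0) = \phi'(1)$. Differentiating the two summands of $\phi$ at $t=1$, using the formulae recalled in the preamble, yields
\[
\varphi'^{-}(0) = A_i T(1)x + B_i x + \int_0^1 A_i T(1-\xi) B_i x\,\mathrm{d}\xi = A_i \varphi(0) + B_i x.
\]
The main obstacle is the final identification of the summand $B_i x$ with the term $B_i \varphi$ appearing in the definition of $D(\mathcal{G}_i)$: since $B_i$ acts on $\mathcal{C}$, the notation $B_i x$ must be read as $B_i$ applied to a constant history of value $x$ (consistent with how $B_i x$ is already treated inside the Bochner integral), and one then uses this convention to match the compatibility condition. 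Once this bookkeeping is settled, conditions (a), (b), (c) all hold and $\varphi \in D(\mathcal{G}_i)$.
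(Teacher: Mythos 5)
Your steps (a), (b), (c) reproduce the paper's own argument: the paper's ``proof'' of this lemma is precisely the displayed computation in the paragraph preceding it, namely setting $\phi(t)=T(t)x+\int_0^t T(t-\xi)B_ix\,\mathrm{d}\xi$, asserting $\phi(1)\in D(A_i)$, differentiating via Pazy to get $\phi'(1)=A_i\phi(1)+B_ix$, and then putting $\varphi=\phi(\cdot+1)$. Your explicit verifications of continuity and of $\varphi(0)\in D(A_i)$ (via closedness of $A_i$ and the hypothesis $B_ix\in D(A_i)$) fill in details the paper only asserts, and are correct.

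The one genuine problem is exactly the point you flag at the end, and your proposed resolution is the wrong one. You read $B_ix$ as $B_i$ applied to the \emph{constant} history $\hat{x}(\theta)\equiv x$. Under that convention the compatibility condition does not close: you have shown $\varphi'^{-}(0)=A_i\varphi(0)+B_i\hat{x}$, whereas membership in $D(\mathcal{G}_i)$ requires $\varphi'^{-}(0)=A_i\varphi(0)+B_i\varphi$, and $B_i(\varphi-\hat{x})\neq 0$ in general since $\varphi$ is not the constant function $x$ (already $\varphi(0)=T(1)x+\int_0^1T(1-\xi)B_ix\,\mathrm{d}\xi\neq x$). The convention the paper actually uses --- visible in the proof of Corollary~\ref{cor 100}, where the compatibility condition is written as $\varphi'(0)=A\varphi(0)+B_0\varphi(-1)$, and in Example~\ref{exa 1}, where $B_i\phi=b_i\phi(-1)$ --- is that $B_i\in\mathcal{L}(\mathbb{X})$ acts on a history by point evaluation at $\theta=-1$, i.e.\ the delay term is the pure delay $B_iu(t-1)$ and $B_i\varphi:=B_i\varphi(-1)$. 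With that reading the identification is immediate and exact: $\varphi(-1)=\phi(0)=x$, so $B_i\varphi=B_i\varphi(-1)=B_ix$, and (c) holds with no residual term. So your computation is right, but the ``constant history'' bookkeeping you propose would leave the lemma unproved for a nonconstant $\varphi$; you need the point-evaluation convention instead.
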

 
\begin{corollary}\label{cor 100}
Assume that $A_0=A_1=A$, and $B_i D(A)\subset D(A)$, for $i \in \{0, 1\}$. Then, $D(\mathcal{G}_0) \subset D(\mathcal{G}_1)$ if and only if 
$B_1=B_0$.
\end{corollary}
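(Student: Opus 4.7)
The backward implication is immediate: if $B_1 = B_0$ the two generators $\mathcal{G}_0,\mathcal{G}_1$ coincide as set-theoretic operators, hence their domains coincide.

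For the forward implication, my plan is to read off the identity $B_0 = B_1$ directly from the defining boundary condition in $D(\mathcal{G}_i)$. Suppose $D(\mathcal{G}_0) \subset D(\mathcal{G}_1)$ and pick any $\phi \in D(\mathcal{G}_0)$. Since $\phi$ lies in both domains, the definition forces
\[
A\phi(0) + B_0\phi \;=\; \phi'^{-}(0) \;=\; A\phi(0) + B_1\phi,
\]
which gives $(B_1 - B_0)\phi = 0$ for every $\phi \in D(\mathcal{G}_0)$.

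To conclude $B_1 = B_0$ on all of $\mathcal{C}$ I would invoke two standard facts. First, since $\mathcal{G}_0$ generates the $C_0$-semigroup $(\mathcal{T}_0(t))_{t\ge 0}$ (this is exactly the Travis--Webb theory recalled above), its domain $D(\mathcal{G}_0)$ is dense in $\mathcal{C}$. Second, the operators $B_0,B_1 \in \mathcal{L}(\mathcal{C},\mathbb{X})$ are bounded, hence continuous. A bounded linear operator that vanishes on a dense subset must vanish everywhere, so $B_1 - B_0 \equiv 0$ on $\mathcal{C}$.

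There is no serious obstacle here; the only subtlety worth flagging is that one genuinely needs both the density of $D(\mathcal{G}_0)$ (a consequence of $\mathcal{G}_0$ being a $C_0$-generator, not an ad hoc property) and the boundedness of the $B_i$ to pass from pointwise agreement on $D(\mathcal{G}_0)$ to the global equality $B_0 = B_1$. If desired, one could give a more constructive flavor by feeding the explicit family $\varphi(t) = T(t+1)x + \int_0^{t+1} T(t+1-\xi)B_0 x \, d\xi$ with $x \in D(A)$ from the preceding lemma into the computation, but this still leaves a density argument to be made, so the Hille--Yosida density of $D(\mathcal{G}_0)$ is the cleanest route.
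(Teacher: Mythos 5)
Your proof is correct, but it takes a genuinely different route from the paper's. The paper's argument is constructive: it relies on the preceding lemma, feeding the explicit family $\varphi(t)=T(t+1)x+\int_0^{t+1}T(t+1-\xi)B_0x\,\mathrm{d}\xi$ with $x\in D(A)$ into $D(\mathcal{G}_0)\subset D(\mathcal{G}_1)$, reading off $B_0\varphi(-1)=B_1\varphi(-1)$, i.e.\ $B_0x=B_1x$ for all $x\in D(A)$, and then concluding by density of $D(A)$ in $\mathbb{X}$ together with boundedness of the $B_i$. That is precisely where the hypothesis $B_iD(A)\subset D(A)$ enters, since it is needed to show that $\varphi$ lands in $D(\mathcal{G}_0)$. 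Your argument bypasses the explicit construction entirely: you take an arbitrary $\phi\in D(\mathcal{G}_0)$, subtract the two boundary conditions (using $A_0=A_1$) to get $(B_1-B_0)\phi=0$, and then invoke density of $D(\mathcal{G}_0)$ in $\mathcal{C}$ --- which holds because $\mathcal{G}_0$ generates a $C_0$-semigroup --- plus boundedness of $B_1-B_0\in\mathcal{L}(\mathcal{C},\mathbb{X})$. What your approach buys is twofold: it does not use the hypothesis $B_iD(A)\subset D(A)$ at all, and it yields $B_0=B_1$ as operators on all of $\mathcal{C}$ directly, which is the natural conclusion when the $B_i$ are general elements of $\mathcal{L}(\mathcal{C},\mathbb{X})$ rather than point-delay operators acting through $\varphi(-1)$. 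What the paper's approach buys is an explicit, checkable family of domain elements (the content of the preceding lemma), which is also what powers Example~\ref{exa 1}. Both proofs end with the same ``vanishes on a dense set and is bounded, hence vanishes everywhere'' step, just in different spaces ($\mathbb{X}$ for the paper, $\mathcal{C}$ for you). No gap.
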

\begin{proof}
If we choose any $x\in D(A_0)$, then $\varphi (t)$ as defined in \eqref{1minh-varphi} is an element of $D(G_0)$. Hence, $\varphi \in D(G_1)$. This yields that
\begin{align*}
\varphi'(0) 
& = A\varphi (0) +B_0\varphi(-1)
\\
& = A\varphi (0) +B_1\varphi(-1).
\end{align*}
Therefore, $B_0\varphi (-1) =B_1\varphi (-1)$. In other words, $B_0x=B_1x$ for any given $x\in D(A)$. As $D(A)$ is dense everywhere in $\mathbb{X}$ and $B_0$ and $B_1$ are bounded linear operators, 
\[
B_0=B_1.
\]
This completes the proof.
\end{proof}

\begin{example}\label{exa 1}
Consider functional differential equations of the form
\begin{equation}\label{Exa1}
x'(t) = b_i x(t-1), \quad x(t)\in \mathbb{R},
\end{equation}
where $b_i\in \mathbb{R}$ is a given number for $i \in \{0, 1\}$. As shown in \cite{hal} this functional differential equation generates a $\mathrm{C}_0$-semigroup  in the phase space $\mathbb{X}:= C([-1,0],\mathbb{R})$  with the generator $\mathcal{G}_i$ defined as
\[
[\mathcal{G}_i\phi ](t)=\begin{cases}\phi'(t) 
& \text{ if }
t \in [-1,0),\\
b_i\phi (-1) 
& \text{ if }
t=0,
\end{cases}
\]
with the domain
\[
D(\mathcal{G}_i):= \left \{ \varphi \in C^1([-1,0],\mathbb{R}) 
:
\varphi'(0) =b_i \varphi (-1) \right \}.
\]
In this example, $A=0$ and $B_i=b_i$. 
Obviously, $B_i D(A) \subset D(A)=\mathbb{R}$. We have
\[
\varphi (t)=(1+(1+t)b_i)x
\quad \text{ with }
t\in [-1,0],
\]
where $x\in \mathbb{R}$ is any given number. We can see that $\varphi' (0) = b_ix =b_i\varphi (0)$, so $\varphi \in D(\mathcal{G}_i)$. Assuming that  $D(\mathcal{G}_0)\subset D(\mathcal{G}_1)$ as $x\in\mathbb{R}$ is arbitrary, we can choose a number $x=1$ to see that $b_0=b_1$.
\end{example}

Below we will demonstrate the usefulness of the concept of Yosida distance as it still works in this case when the persistent domain condition is not satisfied to apply the previously known perturbation theory. In fact we study the Yosida distance between the generators of the solution semigroups of partial functional differential equations of the form Eq.~\eqref{315}.

Consider partial functional differential equations of the form
\begin{equation}\label{pfdei}
u'(t)=A_iu(t)+B_iu_t, 
\end{equation}
where $i \in \{0, 1\}$, $A$ is the generator of a $\mathrm{C}_0$-semigroup $\left (T_i(t)\right )_{t\ge 0}$ satisfying $\| T_i(t)\| \le e^{\omega t}$ and $B_i\in \mathcal{L}(\mathbb{X})$ for a certain (fixed) positive constant $\omega$. We will estimate the Yosida distance between the generators $\mathcal{G}_0$ and $\mathcal{G}_1$ of the two solution semigroups generated $(\mathcal{T}_i(t))_{t\ge 0}$ by Eq.~\eqref{pfdei} in the phase space $\mathcal{C}$. For simplicity we will drop the index "$i$" in the following paragraph.

Below we will derive a general formula for the generators of the solution semigroups generated by Eq.~\eqref{pfdei}, where we will drop the index "$i$" as they are similar.
\begin{proposition}
The resolvent $R(\lambda, \mathcal{G})$ of the generator of the solution semigroup $\left (\mathcal S(t)\right )_{t \geq 0}$ in $\mathcal{C}$ is determined by the equation (in $\varphi$)
\begin{equation*}
\varphi (t)
= 
e^{\lambda t} \left[  R(\lambda , A)\left( B\varphi +\psi (0)\right)  \right]
- \int\limits_{0}^{t}e^{\lambda (t-\xi)}\psi(\xi)\mathrm{d}\xi ,
\end{equation*}
where $\varphi:= R(\lambda, \mathcal{G})\psi $ if $\psi$ is a given element of $\mathcal{C}$.
\end{proposition}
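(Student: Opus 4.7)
The plan is to translate the resolvent identity $\varphi=R(\lambda,\mathcal{G})\psi$ into an inhomogeneous linear ODE on $[-1,0]$ together with a boundary condition at $t=0$, then solve the ODE and pin down the ``initial value'' $\varphi(0)$ via that boundary condition.

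First, I would rewrite $\varphi=R(\lambda,\mathcal{G})\psi$ as the equation $(\lambda I-\mathcal{G})\varphi=\psi$ with $\varphi\in D(\mathcal{G})$. Unpacking the description of $\mathcal{G}$ recalled earlier in this section (namely $\mathcal{G}\phi=\phi'$ together with the side conditions $\phi(0)\in D(A)$ and $\phi'(0)=A\phi(0)+B\phi$), this is equivalent to the pointwise problem
\[
\varphi'(t)=\lambda\varphi(t)-\psi(t),\qquad t\in[-1,0],
\]
subject to $\varphi(0)\in D(A)$ and the boundary condition $\varphi'(0)=A\varphi(0)+B\varphi$.

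Second, I would solve this linear $\mathbb{X}$-valued ODE by the standard integrating-factor method: multiplying through by $e^{-\lambda t}$ and integrating from $0$ to $t\in[-1,0]$ gives the variation-of-constants representation
\[
\varphi(t)=e^{\lambda t}\varphi(0)-\int_{0}^{t}e^{\lambda(t-\xi)}\psi(\xi)\,\mathrm{d}\xi,
\]
so that the whole problem reduces to identifying the element $\varphi(0)\in\mathbb{X}$.

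Finally, I would use the boundary condition to determine $\varphi(0)$. Evaluating the ODE at $t=0$ yields $\varphi'(0)=\lambda\varphi(0)-\psi(0)$; matching this with $\varphi'(0)=A\varphi(0)+B\varphi$ and rearranging gives $(\lambda I-A)\varphi(0)=B\varphi+\psi(0)$, hence $\varphi(0)=R(\lambda,A)\bigl(B\varphi+\psi(0)\bigr)$. Substituting back into the variation-of-constants display produces the asserted identity. The only subtlety I would flag is that the right-hand side still contains $\varphi$ through the term $B\varphi$, so the formula is really an implicit equation characterizing $\varphi$ rather than a closed-form expression; this is harmless because for $\lambda\in\rho(\mathcal{G})$ the existence and uniqueness of $\varphi$ is built into the definition of the resolvent, and I expect no further obstacle beyond this bookkeeping.
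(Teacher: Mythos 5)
Your proposal is correct and follows essentially the same route as the paper: rewrite $(\lambda I-\mathcal{G})\varphi=\psi$ as the scalar ODE $\dot\varphi(t)=\lambda\varphi(t)-\psi(t)$ on the delay interval, solve by variation of constants, and use the boundary condition $\dot\varphi(0)=A\varphi(0)+B\varphi$ to identify $\varphi(0)=R(\lambda,A)(B\varphi+\psi(0))$. Your remark that the resulting identity is implicit in $\varphi$ because of the term $B\varphi$ is exactly the point the paper addresses immediately afterwards by introducing the operator $\mathcal{F}_\lambda(A,B)$ and inverting $I-\mathcal{F}_\lambda(A,B)$.
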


\begin{proof}
To find the resolvent $R(\lambda, \mathcal{G})$ for $\lambda > 0$, we will solve the following equation in $\mathcal{C}$
\begin{equation}\label{minh(2)-star}
\lambda \varphi - \mathcal{G}\varphi = \psi 
\quad\text{ for every given }
\psi \in \mathcal{C}.
\end{equation}
As $\varphi \in D(\mathcal{G})$, $\mathcal{G}\varphi(t) = \dot{\varphi}(t)$. From \eqref{minh(2)-star}, we have
\[
\lambda \varphi(t) - \dot{\varphi}(t) = \psi(t),
\]
equivalently
\[
\dot{\varphi}(t)= \lambda \varphi(t) - \psi(t), \quad\text{ for all } t \in [-1, 0].
\]
By the Variation-of-Constants Formula, we have
\begin{equation}\label{513}
\varphi (t) = e^{\lambda (t )} \varphi(0) - \int\limits_{0}^{t}e^{\lambda (t - \xi)}\psi(\xi)\mathrm{d}\xi, \quad\text{ for all } t \in [-1, 0].
\end{equation}
As $\varphi \in D(\mathcal{G})$, we have
\begin{equation*}
\lambda \varphi(0) - \psi (0)=\dot{\varphi}(0) = 
A\varphi(0) + B\varphi.
\end{equation*}
Hence, for large $\lambda$ we have
\[
\varphi (0)= R(\lambda , A)\left( B\varphi +\psi (0)\right),
\]
so
\eqref{513}, for $t\in [-1,0]$ we have
\begin{align*}
\left (R(\lambda, \mathcal{G})\psi\right ) (t)
& =
\varphi (t)
\\
& = 
e^{\lambda t} \left[  R(\lambda , A)\left( B\varphi +\psi (0)\right)  \right]
- \int\limits_{0}^{t}e^{\lambda (t-\xi)}\psi(\xi)\mathrm{d}\xi.
\end{align*}
The proof is complete.
\end{proof}

If we define the operator $\varphi  \mapsto \mathcal{F} \varphi (A,B)$ as 
\begin{align*} 
\mathcal{F}_\lambda (A,B)\varphi (t) &:= e^{\lambda t}  R(\lambda , A)B\varphi  ,
\end{align*}
and $\psi \mapsto \mathcal{J}_\lambda(A)\psi$ as
\[
\mathcal{J}_\lambda (A)\psi (t):= e^{\lambda t}  R(\lambda , A) \psi (0) - \int\limits_{0}^{t}e^{\lambda (t-\xi)}\psi(\xi)\mathrm{d}\xi,
\]
then, for each $\psi \in \mathcal{C}$, we have
\[
(I-\mathcal{F}_\lambda (A,B))\varphi = \mathcal{J}_\lambda (A)\psi .
\]
Note that as 
\begin{align}
\limsup_{\lambda\to +\infty} \| \mathcal{F}_\lambda\| &\le \limsup_{\lambda\to+\infty} \| R(\lambda ,A)\| \|B\| \notag \\
&= \limsup_{\lambda\to +\infty}\frac{\| B\| }{\lambda -\omega }\notag \\
&= 
0 .\label{331}
\end{align}
the operator $R(1, \mathcal{F}_\lambda )(A,B)$ exists, so
\begin{equation}\label{332}
\left (R(\lambda, \mathcal{G})\psi\right ) =\varphi = R(1, \mathcal{F}_\lambda )\mathcal{J}_\lambda \psi .
\end{equation}

\begin{lemma}\label{lem 39}
Let $\mathcal{G}_i$ be the generators of the solution semigroups generated by Eq.~\eqref{pfdei} in the phase space $\mathcal{C}:=C([-1,0], \mathbb{X})$. Assume further that
\[
d_Y(A_0,A_1)< \infty ,\quad 
\| B_i\| <\infty,
\qquad \text{ for }
i \in \{0, 1\}.
\]
Then,
\[
d_Y(\mathcal{G}_0, \mathcal{G}_1) \le 2 d_Y(B_0,B_1) +d_Y(A_0,A_1) .
\]
\end{lemma}
\begin{proof}
By \eqref{332} we have
\begin{align}
\left \|
R(\lambda, \mathcal{G}_0)
-
R(\lambda, \mathcal{G}_1)
\right \| 
&  \le 
\|  
R(1, \mathcal{F}_\lambda(A_0,B_0) )\mathcal{J}_\lambda (A_0)
-R(1, \mathcal{F}_\lambda(A_1,B_1))\mathcal{J}_\lambda (A_1)
 \| 
\notag \\
\hspace{1cm} & \le
\| R(1, \mathcal{F}_\lambda(A_0,B_0)) \mathcal{J}_\lambda (A_0) 
-R(1, \mathcal{F}_\lambda(A_1,B_1)) \mathcal{J}_\lambda (A_0)
\| 
\notag \\
\hspace{1cm} & 
\qquad +
\| R(1,\mathcal{F}_\lambda(A_1,B_1) )\mathcal{J}_\lambda (A_0) 
-R(1, \mathcal{F}_\lambda(A_1,B_1))\mathcal{J}_\lambda (A_1)
 \| 
\notag \\
\hspace{1cm} & \le
\| R(1, \mathcal{F}_\lambda(A_0,B_0))
-R(1, \mathcal{F}_\lambda(A_1,B_1))\| \cdot  \|\mathcal{J}_\lambda (A_0)
\| 
\notag \\
\hspace{1cm} & 
\qquad +
\| R(1, \mathcal{F}_\lambda(A_1,B_1))\| \cdot 
 \| \mathcal{J}_\lambda (A_1)
 -\mathcal{J}_\lambda (A_0)
 \|.
 \label{100}
\end{align}
We will estimate each term in \eqref{100}. For the second term, note that for sufficiently large $\lambda$, 
\begin{align*}
\| R(1, \mathcal{F}_\lambda(A_1,B_1))\| &\le \frac{1}{1-\| \mathcal{F}_\lambda(A_1,B_1)\|},
\\
\| \mathcal{J}_\lambda (A_1) -\mathcal{J}_\lambda (A_0)
\| 
& \le
\| R(\lambda, A_1)-R(\lambda ,A_0)\|,
\end{align*}
so by \eqref{331}
\[
\limsup_{\lambda\to +\infty} \lambda^2  \| R(1, \mathcal{F}_\lambda(A_1,B_1))\| \cdot \| \mathcal{J}_\lambda (A_1)
-
\mathcal{J}_\lambda (A_0)
\| 
\le  d_Y(A_0,A_1) .
\]
Now we estimate the first term. Using the identity for any $U,\, V\in \mathcal{L}(\mathbb{X})$ with sufficiently large $\lambda$
\[
R(\mu, U) - R(\mu, V)
=R(\mu,V)
R(\mu,  U)U  - VR(\mu, V)R(\mu, U)
\]
and notation
\[
C_\lambda := R(1,\mathcal{F}_\lambda(A_1,B_1))R(1,\mathcal{F}_\lambda(A_0,B_0)),
\]
we have
\begin{equation}
\label{336}
\| R(1, \mathcal{F}_\lambda(A_0,B_0)) -R(1, \mathcal{F}_\lambda(A_1,B_1))\|    
\le 
C_\lambda \mathcal{F}_\lambda(A_0,B_0) - \mathcal{F}_\lambda(A_1,B_1)C_\lambda.
\end{equation}
By \eqref{331} using the identity 
\[
R(1,U)-I = R(1,U)U
\]
we see that  
\begin{align*}
\| C_\lambda - I\| 
& \le
\| (R(1,\mathcal{F}_\lambda(A_1,B_1))-I)  R(1, \mathcal{F}_\lambda(A_0,B_0))\| 
\\
& \qquad 
+ \| R(1, \mathcal{F}_\lambda(A_0,B_0)) -I\|
\\
& =
\| R(1, \mathcal{F}_\lambda(A_1,B_1))\| \cdot \| \mathcal{F}_\lambda(A_1,B_1 )\| \cdot \| \mathcal{F}_\lambda(A_0,B_0))\|
\\
& \qquad 
+
\| R(1,\mathcal{F}_\lambda(A_0,B_0))\|
\cdot
\| \mathcal{F}_\lambda(A_0,B_0))\| .
\end{align*}
Consequently, since
\[
\| \mathcal{J}_\lambda (A_0)  \| \le \frac{2}{\lambda -\omega}
\]
we can easily show that 
\begin{align*}
&
\limsup_{\lambda\to +\infty} \lambda^2(C_\lambda - I)\mathcal{F}_\lambda(A_0,B_0))\mathcal{J}_\lambda (A_0) =0,
\\
&
\limsup_{\lambda\to +\infty} \lambda^2  \mathcal{F}_\lambda(A_1,B_1)(C_\lambda - I)\mathcal{J}_\lambda (A_0) =0.
\end{align*}
Hence, by \eqref{336}
\begin{align*}
& \limsup_{\lambda\to +\infty}  \lambda^2 \| \left(  R(1,\mathcal{F}_\lambda(A_0,B_0))
- R(1,\mathcal{F}_\lambda(A_1,B_1))  \right)
\mathcal{J}_\lambda (A_0) \|
\\
& \le \limsup_{\lambda\to +\infty}  \lambda^2 \left\| \mathcal{F}_\lambda(A_0,B_0) - \mathcal{F}_\lambda(A_1,B_1) \right\| \cdot \| \mathcal{J}_\lambda (A_0)\| \\
& \le  \limsup_{\lambda\to +\infty}  \lambda^2 \| R(\lambda ,A_0)B_0-R(\lambda ,A_0)B_1 +R(\lambda ,A_0)B_1-R(\lambda ,A_1)B_1\| \cdot \| \mathcal{J}_\lambda (A_0)\| \\
&\le  \limsup_{\lambda\to +\infty} \left( \frac{ \lambda^2 }{\lambda -\omega} \| B_0-B_1\| \cdot \frac{2}{\lambda -\omega} + \lambda^2 \|R(\lambda,A_0)-R(\lambda,A_1)\| \cdot \frac{2}{\lambda -\omega} \right) \\
&\le 2 \|B_0-B_1\| .
\end{align*}
Finally,
\[
\limsup_{\lambda\to +\infty}  \lambda^2  \| R(\lambda, \mathcal{G}_0) 
-
R(\lambda, \mathcal{G}_1)  \|  \le 2 d_Y(B_0,B_1)+d_Y(A_0,A_1),
\]
finishing the proof.
\end{proof}

\begin{definition}
Let $A$ generate a $\mathrm{C}_0$-semigroup in a Banach space $\mathbb{X}$ and $B \in \mathcal{L}(\mathbb{X})$. Then, the partial functional differential equation
\[
u'(t)=Au(t)+Bu_t,\quad t\ge 0,
\]
is said to have an \textit{exponential dichotomy} if the solution semigroup generated by this equation in $\mathcal{C}$ has an exponential dichotomy.
\end{definition}

As an immediate consequence of Lemma~\ref{lem 39} we have the following:
\begin{theorem}\label{the main1}
Let $A_i$ generate a $\mathrm{C}_0$-semigroup $\left (T_i(t)\right )_{t \geq 0}$, $i \in \{0, 1\}$, in a Banach space $\mathbb{X}$ that satisfies $\|T_i(t)\| \le Me^{\omega t}$, $t\ge 0$, and $B_0,\, B_1\in \mathcal{L}(\mathcal{C}, \mathbb{X})$. Assume that the partial functional differential equation 
\[
u'(t)=A_0u(t)+B_0u_t,\quad t\ge 0,
\]
has an exponential dichotomy. Then, the partial functional differential equation 
\[
u'(t)=A_1u(t)+B_1u_t,\quad t\ge 0,
\]
also has an exponential dichotomy if the Yosida distance $d_Y(A_0, A_1)$ and $ d_Y(B_0, B_1)$ are sufficiently small.
\end{theorem}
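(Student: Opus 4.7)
The plan is to reduce Theorem~\ref{the main1} to the abstract perturbation result Theorem~\ref{the per} applied in the phase space $\mathcal{C}$, using the Yosida distance estimate for the generators of the solution semigroups provided by Lemma~\ref{lem 39}. The assertion that $u'(t) = A_0 u(t) + B_0 u_t$ has an exponential dichotomy translates, by definition, into the statement that the $C_0$-semigroup $(\mathcal{T}_0(t))_{t \geq 0}$ on $\mathcal{C}$ generated by $\mathcal{G}_0$ is hyperbolic. Likewise, exponential dichotomy of the perturbed equation amounts to hyperbolicity of the $C_0$-semigroup on $\mathcal{C}$ generated by $\mathcal{G}_1$.

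First, I would invoke Theorem~\ref{the per} with $A := \mathcal{G}_0$ and $B := \mathcal{G}_1$: once $d_Y(\mathcal{G}_0, \mathcal{G}_1)$ is sufficiently small, the semigroup generated by $\mathcal{G}_1$ inherits the exponential dichotomy. The only remaining task is therefore to arrange that $d_Y(\mathcal{G}_0, \mathcal{G}_1)$ can be made as small as desired by controlling the data $(A_0, B_0)$ versus $(A_1, B_1)$. This control is exactly what Lemma~\ref{lem 39} delivers, via the bound
\begin{equation*}
d_Y(\mathcal{G}_0, \mathcal{G}_1) \leq 2\, d_Y(B_0, B_1) + d_Y(A_0, A_1).
\end{equation*}

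Given any threshold $\varepsilon > 0$ dictated by Theorem~\ref{the per}, I would then choose $d_Y(A_0, A_1) < \varepsilon/2$ and $d_Y(B_0, B_1) < \varepsilon/4$, which forces $d_Y(\mathcal{G}_0, \mathcal{G}_1) < \varepsilon$. Applying Theorem~\ref{the per} to $\mathcal{G}_1$ yields exponential dichotomy of $(\mathcal{T}_1(t))_{t \geq 0}$, which is the conclusion.

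Since all substantive work has already been done in Lemma~\ref{lem 39} and Theorem~\ref{the per}, there is no genuine obstacle at this stage: Theorem~\ref{the main1} is a straightforward composition of these two results. The only point to verify carefully is a compatibility check at the level of hypotheses, namely that the uniform bound $\|T_i(t)\| \leq M e^{\omega t}$ and the boundedness of $B_i \in \mathcal{L}(\mathcal{C}, \mathbb{X})$ are exactly the standing assumptions used in the proof of Lemma~\ref{lem 39}, so that its conclusion is legitimately available. With that verified, the proof is complete in a few lines.
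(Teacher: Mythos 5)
Your proposal is correct and follows exactly the route the paper intends: the paper states Theorem~\ref{the main1} as ``an immediate consequence of Lemma~\ref{lem 39}'', i.e.\ precisely the combination of the generator estimate $d_Y(\mathcal{G}_0,\mathcal{G}_1)\le 2\,d_Y(B_0,B_1)+d_Y(A_0,A_1)$ with the abstract roughness result Theorem~\ref{the per} applied to the solution semigroups in $\mathcal{C}$. Your explicit choice of thresholds and the hypothesis-compatibility check are exactly the (unwritten) details the paper leaves to the reader.
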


\begin{remark}
In the infinite-dimensional case Theorem~\ref{the main1} seems to be new as the exponential dichotomy of the perturbed equation has been known in the literature only when $A_0=A_1$. In the infinite-dimensional case, the "standard" Variation-of-Constants Formula is not available, making the study of perturbed equation much harder (see e.g.  \cite{adiezz, adiezzouh, hinmurnaimin}).
\end{remark}

\section{Perturbation in $\alpha$-Norm of Partial Functional Differential Equations}
\label{Sect-Pealphanorm}

In this section we consider partial functional differential equations of the form
\begin{equation}\label{PFDE2}
u'(t)=A_0u(t)+B_0u_t, \quad t\ge 0, 
\end{equation}
where $u(t)\in \mathbb{X}$, $\mathbb{X}$ is a Banach space, $A_0$ is the generator of an analytic semigroup in $\mathbb{X}$ such that $\Re \sigma (A) < 0$. When such an operator $A_0$ is given we consider the operator $(-A_0)^\alpha$ as defined in \cite{paz} with $0\le \alpha < 1$. We will denote 
\[
\mathbb{X}_0^\alpha :=D(A_0^\alpha),
\quad
C_{0,\alpha}:= C([-r,0]; \mathbb{X}_0^\alpha).
\]
We assume that $B_0:C_{0,\alpha} \to \mathbb{X}$ is a bounded linear operator. 
We will use  some standard results in the theory of partial functional differential equations in \cite{traweb}:
\begin{theorem}
Under the above mentioned assumptions on $A_0$ and $B_0$, Eq.~\eqref{PFDE2} generates a strongly semigroup of bounded linear operators $(S(t))_{t\ge 0}$ in $C_\alpha$ defined as
\[
S(t)\phi (\theta ):= u(t+\theta ), \quad \theta \in [-r,0],
\]
where $u(\cdot)$ is the unique solution of the Cauchy Problem associated with Eq.~\eqref{PFDE2} with initial condition 
\[
u_0=\phi.
\]
Moreover, the infinitesimal generator $\mathcal{G}$ of the semigroup $\left (S(t)\right )_{t\ge 0}$ is defined as
\begin{align*}
D(\mathcal{G})&= \{ \phi \in C_\alpha : \  \phi '\in C_\alpha , \phi(0)\in D(A), \phi '^-(0)=A_0\phi(0)+B_0\phi \},
\\
(\mathcal{G}\phi )(\theta )& =\phi' (\theta), \quad \theta \in [-r,0].
\end{align*}
\end{theorem}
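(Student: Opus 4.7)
The strategy follows the classical Travis--Webb construction adapted to the $\alpha$-norm phase space: one first solves the Cauchy problem by a fixed-point argument for the mild formulation, then verifies the semigroup laws, and finally characterises the generator by matching the right-derivative at $t=0^+$ to the compatibility condition at $\theta=0$. The underlying integral equation, obtained from the variation-of-constants formula applied in $\mathbb{X}$ to $u'(t)=A_0u(t)+B_0u_t$, reads
\[
u(t) = T(t)\phi(0) + \int_0^t T(t-s)\,B_0 u_s\,\mathrm{d}s, \qquad t\ge 0, \quad u_0=\phi\in C_\alpha,
\]
where $(T(t))_{t\ge 0}$ is the analytic semigroup generated by $A_0$.

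For existence and uniqueness in $\mathbb{X}_0^\alpha$, I would apply $A_0^\alpha$ under the integral and use the analytic-semigroup estimate $\|A_0^\alpha T(t)\|\le M_\alpha t^{-\alpha} e^{-\delta t}$ together with the continuity of $B_0: C_\alpha\to \mathbb{X}$ to obtain
\[
\|u(t)\|_\alpha \le M e^{-\delta t}\|\phi(0)\|_\alpha + M_\alpha \|B_0\|\int_0^t (t-s)^{-\alpha}\|u_s\|_{C_\alpha}\,\mathrm{d}s.
\]
Since $\alpha<1$ the singularity $s^{-\alpha}$ is integrable at the origin, so a Banach contraction argument on $C([0,\tau];\mathbb{X}_0^\alpha)$ for sufficiently small $\tau$ gives a unique local mild solution; global extension and a linear Gronwall-type bound follow routinely, and permit us to define $(S(t)\phi)(\theta):=u(t+\theta)$. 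The semigroup law $S(t+s)=S(t)S(s)$ is a direct consequence of uniqueness together with translation invariance, while strong continuity $S(t)\phi\to \phi$ in $C_\alpha$ reduces on the delay part ($t+\theta\le 0$) to uniform continuity of $\phi$, and on the solved part ($t+\theta>0$) to the fact that $T(\cdot)\phi(0)$ is continuous into $\mathbb{X}_0^\alpha$ at $0^+$ because $\phi(0)\in \mathbb{X}_0^\alpha$, combined with the vanishing of the convolution term in the $\alpha$-norm by dominated convergence.

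The identification of the generator is the delicate part. For $\phi$ in the set claimed to be $D(\mathcal{G})$, differentiating $S(t)\phi(\theta)$ in $t$ at $0^+$ yields $\phi'(\theta)$ on the interior $\theta<0$ directly from the definition via translation, while at $\theta=0$ it yields the right-derivative of $u$ at $0$, which from the integral equation and $\phi(0)\in D(A_0)$ equals $A_0\phi(0)+B_0\phi$; the compatibility assumption $\phi'^-(0)=A_0\phi(0)+B_0\phi$ is exactly what glues these two formulas into the continuous function $\phi'\in C_\alpha$, so $\mathcal{G}\phi=\phi'$ as claimed. Conversely, if $\phi$ lies in the abstract domain of $\mathcal{G}$, the convergence of $(S(t)\phi-\phi)/t$ in $C_\alpha$ forces both the existence of $\phi'\in C_\alpha$ from its interior values and the compatibility relation at $0$. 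The main obstacle is precisely this boundary matching: the requirement $\phi'\in C_\alpha$ is genuinely stronger than mere $C^1$ regularity on $[-r,0]$, because it demands $A_0\phi(0)+B_0\phi\in \mathbb{X}_0^\alpha$, a regularity constraint that is captured by the joint conditions $\phi(0)\in D(A_0)$ and the compatibility equation, and it is also what distinguishes the $\alpha$-norm framework of Travis--Webb~\cite{traweb2} from the classical $C$-norm setting.
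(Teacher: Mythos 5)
The paper gives no proof of this theorem; it is stated as a known result imported from Travis--Webb~\cite{traweb2}, and your sketch reproduces exactly the standard construction used there (mild formulation via the variation-of-constants formula in $\mathbb{X}$, contraction mapping using the integrable singularity $\|A_0^\alpha T(t)\|\le M_\alpha t^{-\alpha}e^{-\delta t}$ with $\alpha<1$, semigroup law by uniqueness, and identification of the generator through the compatibility condition $\phi'^-(0)=A_0\phi(0)+B_0\phi$ at $\theta=0$). Your argument is correct as a sketch and takes the same route as the cited source.
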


As a perturbed equation of Eq.~\eqref{PFDE2} we consider partial functional differential equations
\begin{equation}\label{PFDEp}
u'(t)=A_1u(t)+B_1u_t, \quad t\ge 0 
\end{equation}
under the following assumptions:
\begin{assumption}\label{a1} Throughout this section with a given fixed $\alpha \in [0,1)$ we assume that
\begin{enumerate}[label=\textnormal{Assumption~(A\arabic*)},
ref=\textnormal{Assumption~(A\arabic*)},
leftmargin=*]
\item\label{ItemA1} 
$A_0$ is the generator of an analytic $\mathrm{C}_0$-semigroup $\left (T(t)\right )_{t\ge 0} $ such that there are positive constants $M$ and $ \omega$ for which the following is valid for $t\ge 0$
\begin{equation}\label{5.5}
\| T(t)\| \le e^{-\omega t}.
\end{equation}		
\item\label{ItemA2} 
$D(A_0) \subset D(A_1) $ and $A_1$ is $A_0$-bounded, that is, there are positive constants $\delta$ and $\gamma$ such that
\[
\| A_1 x\| \le \delta \| x\| + \gamma \| A_0x\|, \quad \text{ for all } x\in D(A_0).
\]

\item\label{ItemA3}
$
(A_1-A_0)A_0^{-\alpha} \in \mathcal{L} (\mathbb{X},\mathbb{X})
$.
\end{enumerate}

\begin{enumerate}[label=\textnormal{Assumption~(B\arabic*)},
ref=\textnormal{Assumption~(B\arabic*)},
leftmargin=*]

\item\label{ItemB1}
$
B_0 \in \mathcal{L}(C_{0,\alpha} ,\mathbb{X})$ and $ B_1 \in \mathcal{L}(C_{1,\alpha} ,\mathbb{X})
$.
\end{enumerate}
\end{assumption}
\begin{remark}
\
\begin{enumerate}
\item Under \ref{ItemA1}, $A^{-\alpha} \in \mathcal{L}(\mathbb{X},\mathbb{X})$ and the space $\mathbb{X}_{0}^\alpha := D(A_0^\alpha )$ with norm $
\| x\| _\alpha := \| A_0^\alpha x\|$ is well-defined (see \cite{hen,paz}). 
Moreover, for sufficiently small $\delta$ and $\gamma$ in \ref{ItemA2}, the operator $A_1$ is also the generator of an analytic $\mathrm{C}_0$-semigroup satisfying a similar inequality as \eqref{5.5}, so there is another $\alpha$ space $\mathbb{X}^\alpha _1 :=D(A_1^\alpha )$ with norm $\|x\|_{1,\alpha}:= A_1^\alpha$.

\item\label{XQR2} By \ref{ItemA3} the vector spaces $\mathbb{X}^\alpha_0$  and $X_1^\alpha$ are the same, and the norms $\| x\|_\alpha $ and $\| x\|_{1,\alpha}$ are equivalent (see \cite[Theorem~1.4.8,~p.~29]{hen}, see also Theorem~\ref{ThmHenryFower}).

\item As a consequence of the above remark \eqref{XQR2}, under \ref{ItemB1}, the operator $B_1$ is actually an element of $\mathcal{L}(C_{0,\alpha}, \mathbb{X})$.
\end{enumerate}
\end{remark}

\begin{lemma}\label{lem 40} 
Under Assumptions~\ref{a1}, for sufficiently small $\delta$, the partial functional differential equations
\[
u'(t)=A_iu(t)+B_iu_t, \quad t\ge 0 ,
\]
where $i \in \{0, 1\}$, generate $\mathrm{C}_0$-semigroups in $C_\alpha$ with the generators $\mathcal{G}_i$, $i \in \{0, 1\}$. Moreover,
if
\[
d_Y(B_0,B_1) <\infty,
\quad
d_Y(A_0,A_1) <\infty ,
\]
then,
\begin{equation}\label{alpha-99}
d_Y(\mathcal{G}_0, \mathcal{G}_1) \le 2 d_Y(B_0,B_1) +d_Y(A_0,A_1) <\infty .
\end{equation}
\end{lemma}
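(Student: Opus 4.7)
My plan is to transplant the argument of Lemma~\ref{lem 39} to the $\alpha$-phase space $C_\alpha = C([-r,0],\mathbb{X}^\alpha)$. First, I would verify that both equations generate $C_0$-semigroups in $C_\alpha$: under Assumption~\ref{a1}, Theorem~\ref{ThmHenryFower} shows that $\mathbb{X}^\alpha_0$ and $\mathbb{X}^\alpha_1$ coincide as Banach spaces with equivalent norms, so both equations satisfy the hypotheses of the Travis--Webb generation theorem recalled just before the Lemma, giving $C_0$-semigroups in $C_\alpha$ with the stated generators $\mathcal{G}_i$. Since the algebraic form of $\mathcal{G}_i$ and the boundary condition $\varphi'^-(0) = A_i\varphi(0) + B_i\varphi$ are identical to those of Section~\ref{Sect-PePFDE}, solving $(\lambda - \mathcal{G}_i)\varphi = \psi$ yields the \emph{same} resolvent representation $R(\lambda,\mathcal{G}_i)\psi = R(1,\mathcal{F}_\lambda(A_i,B_i))\,\mathcal{J}_\lambda(A_i)\psi$, with $\mathcal{F}_\lambda$ and $\mathcal{J}_\lambda$ defined by the same formulas as in Section~\ref{Sect-PePFDE} but now interpreted as operators on $C_\alpha$.

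Next, I would redo the four quantitative estimates underpinning the proof of Lemma~\ref{lem 39} in the $C_\alpha$-norm. The standard fractional-power bound $\|A_i^\alpha R(\lambda,A_i)\|_{\mathcal{L}(\mathbb{X})} = O(\lambda^{\alpha-1})$, available under (A1) from the analytic semigroup theory recalled in the Preliminaries, combined with $B_i \in \mathcal{L}(C_\alpha,\mathbb{X})$, gives $\|\mathcal{F}_\lambda(A_i,B_i)\|_{\mathcal{L}(C_\alpha)} \to 0$, so $R(1,\mathcal{F}_\lambda(A_i,B_i))$ exists by Neumann series for large $\lambda$. Since every $\psi \in C_\alpha$ satisfies $\psi(0) \in D(A_i^\alpha)$, the commutation $A_i^\alpha R(\lambda,A_i)\psi(0) = R(\lambda,A_i) A_i^\alpha \psi(0)$, together with a direct estimation of the integral term of $\mathcal{J}_\lambda$ in the $\alpha$-norm, yields $\|\mathcal{J}_\lambda(A_i)\|_{\mathcal{L}(C_\alpha)} = O(\lambda^{-1})$. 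With these in hand, exactly the telescoping used in \eqref{100} and the identity $R(1,U) - R(1,V) = R(1,U)(U-V)R(1,V)$ reduce the whole proof to bounding $\lambda^2\|\mathcal{J}_\lambda(A_1) - \mathcal{J}_\lambda(A_0)\|_{\mathcal{L}(C_\alpha)}$ and $\lambda^2\|\mathcal{F}_\lambda(A_0,B_0) - \mathcal{F}_\lambda(A_1,B_1)\|_{\mathcal{L}(C_\alpha)}$ in the limit $\lambda \to \infty$.

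The hardest step, I expect, is controlling the resolvent difference $A_0^\alpha(R(\lambda,A_0) - R(\lambda,A_1))\psi(0)$ appearing inside both expressions above: the naive fractional-power estimates only produce an $O(\lambda^{\alpha-1})$ decay, which after multiplication by $\lambda^2$ would give a divergent $\limsup$. Here I would invoke Assumption (A3) decisively: writing
$$
R(\lambda,A_0) - R(\lambda,A_1) = -R(\lambda,A_1)(A_1 - A_0) R(\lambda,A_0)
$$
and inserting $A_0^{-\alpha} A_0^\alpha$ between $(A_1-A_0)$ and $R(\lambda,A_0)$, the boundedness of $(A_1-A_0)A_0^{-\alpha}$ absorbs the loss of regularity, while the commutation on $D(A_0^\alpha)$ and the equivalence $\|\cdot\|_{0,\alpha}\sim\|\cdot\|_{1,\alpha}$ on $\mathbb{X}^\alpha$ from Theorem~\ref{ThmHenryFower} convert the whole expression into a bound involving $\lambda^2\|R(\lambda,A_0) - R(\lambda,A_1)\|_{\mathcal{L}(\mathbb{X})}$, which is precisely $d_Y(A_0,A_1)$ in the limit. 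The analogous treatment of the $\mathcal{F}_\lambda$-difference, split as in Lemma~\ref{lem 39} into a $B_0-B_1$ piece and a resolvent-difference piece, contributes the remaining $2\,d_Y(B_0,B_1)$ via the same arithmetic as in the closing displays of that proof, yielding \eqref{alpha-99}.
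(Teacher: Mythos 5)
Your overall strategy is the same as the paper's: the published proof of Lemma~\ref{lem 40} consists essentially of the single remark that one "follows the same lines" as Lemma~\ref{lem 39} in the phase space $C_\alpha$, and you flesh out precisely that plan. The parts you make explicit are sound: generation in $C_\alpha$ follows from Theorem~\ref{ThmHenryFower} plus the Travis--Webb theory, the resolvent representation $R(\lambda,\mathcal{G}_i)=R(1,\mathcal{F}_\lambda(A_i,B_i))\mathcal{J}_\lambda(A_i)$ carries over verbatim, $\|\mathcal{F}_\lambda(A_i,B_i)\|_{\mathcal{L}(C_\alpha)}=O(\lambda^{\alpha-1})\to 0$, and $\|\mathcal{J}_\lambda(A_i)\|_{\mathcal{L}(C_\alpha)}=O(\lambda^{-1})$ by commuting $A_i^\alpha$ through $R(\lambda,A_i)$ on $D(A_i^\alpha)$. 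You also correctly identify the danger point, namely that the resolvent differences must now be measured in the $\alpha$-norm.

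However, the step you describe as resolving that difficulty does not close it, and this is a genuine gap (one the paper's own two-sentence proof does not address either). Inserting $A_0^{-\alpha}A_0^{\alpha}$ and using (A3) gives
\begin{equation*}
A_0^{\alpha}\bigl(R(\lambda,A_0)-R(\lambda,A_1)\bigr)A_0^{-\alpha}
= A_0^{\alpha}R(\lambda,A_1)\cdot\bigl[(A_0-A_1)A_0^{-\alpha}\bigr]\cdot R(\lambda,A_0),
\end{equation*}
whose three factors are $O(\lambda^{\alpha-1})$, $O(1)$ and $O(\lambda^{-1})$ respectively; the product is $O(\lambda^{\alpha-2})$, so after multiplication by $\lambda^{2}$ one gets $O(\lambda^{\alpha})$, which diverges for $\alpha>0$ and is in no way "converted into" $\lambda^{2}\|R(\lambda,A_0)-R(\lambda,A_1)\|_{\mathcal{L}(\mathbb{X})}$. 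The same loss of a factor $\lambda^{\alpha}$ occurs in the $B$-term: in the closing display of the proof of Lemma~\ref{lem 39} the bound $\|R(\lambda,A_0)\|=O(\lambda^{-1})$ must be replaced by $\|A_0^{\alpha}R(\lambda,A_0)\|_{\mathcal{L}(\mathbb{X})}=O(\lambda^{\alpha-1})$, because $(B_0-B_1)\varphi$ lands only in $\mathbb{X}$ and cannot be commuted past $A_0^{\alpha}$; hence $\lambda^{2}\cdot O(\lambda^{\alpha-1})\cdot O(\lambda^{-1})=O(\lambda^{\alpha})$ again and the constant $2$ in \eqref{alpha-99} is not recovered. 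The arithmetic of Lemma~\ref{lem 39} does go through if $d_Y(A_0,A_1)$ is interpreted as the Yosida distance of the parts of $A_0,A_1$ in $\mathbb{X}^\alpha$ (resolvent differences measured in $\mathcal{L}(\mathbb{X}^\alpha)$) and the $B$-difference is measured into $\mathbb{X}^\alpha$, but that is a different statement from \eqref{alpha-99} with the distances taken in $\mathcal{L}(\mathbb{X})$ and $\mathcal{L}(C_\alpha,\mathbb{X})$. As written, your argument (like the paper's reference to Lemma~\ref{lem 39}) leaves this quantitative step unproved; you would need either to make the $\alpha$-scale interpretation of the right-hand side of \eqref{alpha-99} explicit, or to supply an additional assumption controlling $A_0^{\alpha}\bigl(R(\lambda,A_0)-R(\lambda,A_1)\bigr)$ and $A_0^{\alpha}R(\lambda,A_0)(B_0-B_1)$ with the decay $O(\lambda^{-2})$ and $O(\lambda^{-1})$ respectively.
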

\begin{proof}
The proof can be done in the same way as in that of Lemma~\ref{lem 39}. 
In fact, under Assumptions~\ref{a1} with sufficiently small $\delta$, by \cite{traweb2}, Eq. \eqref{PFDEp} generates a solution semigroup $\left (S_1(t)\right )_{t\ge 0}$ in the same space $C_\alpha$ as does the unperturbed equation Eq.~\eqref{PFDE2}. 
If we denote their generators  by $\mathcal{G}_0$ and $\mathcal{G}_1$, respectively, then, by following the same lines as in the proof of Lemma~\ref{lem 39} we can prove \eqref{alpha-99}.
\end{proof}

\begin{theorem}\label{the main2}
Let the assumptions of Lemma \ref{lem 40} be made. Moreover, assume that the $\mathrm{C}_0$-semigroup with the generator $\mathcal{G}_0$ has an exponential dichotomy. Then, for $A_1$ and $B_1$ sufficiently close to $A_0$ and $B_0$ in the Yosida distance's sense, the semigroup with the generator $\mathcal{G}_1$ has also an exponential dichotomy.
\end{theorem}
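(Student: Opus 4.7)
The plan is to reduce the statement to a direct combination of two earlier results: the abstract roughness theorem, Theorem~\ref{the per}, and the estimate of the Yosida distance of the solution-semigroup generators, Lemma~\ref{lem 40}. The idea is that once we can measure the perturbation between $\mathcal{G}_0$ and $\mathcal{G}_1$ in a common phase space, Theorem~\ref{the per} closes the argument with no further work.

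First I would justify that both $\mathcal{G}_0$ and $\mathcal{G}_1$ act in the \emph{same} Banach space. This is precisely where Assumption~(A3) plays its decisive role: combined with Assumption~(A2) and Theorem~\ref{ThmHenryFower}, it yields $\mathbb{X}_0^{\alpha} = \mathbb{X}_1^{\alpha}$ with equivalent norms, so the spaces $C_{0,\alpha}$ and $C_{1,\alpha}$ coincide (set-theoretically and topologically) with a common space $C_\alpha$. Under Assumption~(B1) this in turn forces $B_1 \in \mathcal{L}(C_\alpha,\mathbb{X})$. Hence the Travis--Webb framework applies to both equations and produces two $C_0$-semigroups in $C_\alpha$ with generators $\mathcal{G}_0$ and $\mathcal{G}_1$, whose Yosida approximations are well-defined on the same space.

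Second, I would invoke Lemma~\ref{lem 40} to obtain the key estimate
\begin{equation*}
d_Y(\mathcal{G}_0,\mathcal{G}_1) \le 2\, d_Y(B_0,B_1) + d_Y(A_0,A_1).
\end{equation*}
Given any $\varepsilon>0$, it suffices to require $d_Y(A_0,A_1)<\varepsilon/2$ and $d_Y(B_0,B_1)<\varepsilon/4$ to guarantee $d_Y(\mathcal{G}_0,\mathcal{G}_1)<\varepsilon$. Thus smallness of the coefficient-level Yosida distances propagates to smallness of the generator-level Yosida distance in $C_\alpha$.

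Finally, since by hypothesis the $C_0$-semigroup generated by $\mathcal{G}_0$ has an exponential dichotomy, Theorem~\ref{the per} applies in the Banach space $C_\alpha$ and yields the existence of an exponential dichotomy for the semigroup generated by $\mathcal{G}_1$ as soon as $d_Y(\mathcal{G}_0,\mathcal{G}_1)$ is small enough. By the definition of exponential dichotomy for a PFDE, this is exactly the statement that Eq.~\eqref{PFDEp} has an exponential dichotomy, completing the proof. The only conceptually delicate point is the coincidence of phase spaces, which has already been arranged by the assumption package; everything else is a direct concatenation of Lemma~\ref{lem 40} and Theorem~\ref{the per}, so there is no substantive obstacle remaining.
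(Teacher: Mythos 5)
Your proposal is correct and follows essentially the same route the paper intends: the coincidence of the phase spaces via Assumption~(A3) and Theorem~\ref{ThmHenryFower}, the generator-level estimate of Lemma~\ref{lem 40}, and then Theorem~\ref{the per} to conclude. The paper leaves this concatenation implicit (stating the theorem right after Lemma~\ref{lem 40} without a separate proof), and your write-up supplies exactly the intended argument.
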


\section{Examples}
\label{Sect-examples}

In this section, we provide illustrative examples that demonstrate the applicability of the main results obtained in the previous sections.

\begin{example}\label{XQExx1}
Consider 
reaction-diffusion equations with finite delay
\begin{equation}\label{PFDE}
\left\{
\begin{aligned}
\frac{\partial w(x, t)}{\partial t} 
&=
\frac{\partial^2 w(x, t)}{\partial x^2}
- a w(x,t) 
- b w(x, t-r), 
&&
0 \leq x \leq \pi,
&& t \geq 0,
\\
w(0, t) &= w(\pi, t) = 0,
&& 
&& t \geq 0,
\\
w(x, t) &= \varphi(t)(x),
&& 0 \leq x \leq \pi,
&& -r \leq t \leq 0,
\end{aligned}
\right .
\end{equation}
where $a$, $b$, and $r$ are given positive constants. 
\end{example}

We choose the Hilbert space $\mathbb{X} = L^2 [0, \pi]$ and consider the linear operator
\begin{align*}
A_T y &= \ddot{y} \quad \text{ on the domain }
\\
D(A_T)&=\{y\in\mathbb{X}:
\text{ $y$ and $\dot{y}$ are absolutely continuous, }
\ddot{y}\in\mathbb{X},\,
y(0) = y(\pi) = 0
\},
\end{align*}
and
\begin{equation*}
A_0:= A_T-aI, 
\qquad 
[B_0\phi](\theta)
:= \begin{cases} 
0 & \text{ if }  0<\theta \le r,\\
b\phi(-r) & \text{ if } \theta =0.
\end{cases}
\end{equation*}
Consider the evolution equation
\begin{equation}\label{PFDE_0}
u'(t)=A_0u(t)+B_0u_t
\end{equation}
in the Hilbert space $\mathbb{X}$. Note that $A_0$ generates an analytic compact semigroup in $\mathbb{X}$ (see \cite{traweb2}) and Eq.~\eqref{PFDE_0} generates a compact semigroup $(\mathcal{S}_0(t))_{t\ge 0}$ in $\mathcal{C}=C([-r,0], \mathbb{X})$. 
By the Spectral Mapping Theorem as $\left (\mathcal{S}_0(t)\right )_{t\ge 0}$ is compact, the exponential dichotomy of this semigroup can be determined by the fact that the spectrum of its generator does not intersect with the imaginary axis. 
In turn, the spectrum of its generator consists of only eigenvalues, so the spectrum of $A_T$ consists of eingenvalues
$
\sigma(A_T)=
\left \{-n^2: n \in\mathbb{N}\right \},
$
they are determined by the characteristic equation
\begin{equation}\label{cheq}
\lambda + a + be^{-\lambda r} = -n^2.
\end{equation}
Semigroup $\left (\mathcal{S}_0(t)\right )_{t\ge 0}$ has an exponential dichotomy if and only if Eq.~\eqref{cheq} has no root of the form $i\xi$, where $\xi \in\mathbb{R}$, for all $n=1,2, \dots$. For instance, $\left (\mathcal{S}_0(t)\right )_{t\ge 0}$  has an exponential dichotomy if either $b<\frac{1}{r}$ or $a+1>b$. 
So, below we will make the following assumption on Eq.~\eqref{PFDE}:
\begin{assumption}\label{A}
Either $b < \frac{1}{r}$ or $a+1 > b$.
\end{assumption}

As a perturbed version of Eq.~\eqref{XQExx1} (and hence of \eqref{PFDE_0}), we consider the following example.

\begin{example}
Consider the following reaction-diffusion equation with finite delay
\begin{equation}\label{PFDE-1}
\left\{
\begin{aligned}
\frac{\partial w(x, t)}{\partial t} 
&=
(1+\epsilon_1)\frac{\partial^2 w(x, t)}{\partial x^2} +\epsilon_2 \frac{\partial w(x, t)}{\partial x} 
\\
& \qquad 
- (a+\epsilon_3(x))w(x,t)
-b  w(x, t-r)
\\
& \qquad 
+
\int\limits^0_{-r}[\mathrm{d}\epsilon_4(\xi ) ]w(x, t+\xi )\mathrm{d}\xi,
&& 0\le t,
&& 0 \leq x \leq \pi,
\\
w(0, t)
&=
w(\pi, t) = 0,
&&
0\le t,
\\
w(x, t) 
&= 
\varphi(t)(x),
&& -r \leq t \leq 0,
&& 0 \leq x \leq \pi,
\end{aligned}
\right .
\end{equation}
where $\epsilon_1$ and $\epsilon_2$ are small constant, $\epsilon_3(\cdot)$ are smooth functions, $\epsilon_4(\cdot )$ is a left continuous function from $[-r,0]$ to $\mathcal{L}(\mathcal{C}, \mathbb{X})$ with bounded variation $\operatorname{Var} (\epsilon_4)$.

If we re-write the equation \eqref{PFDE-1} in the abstract form we have
\[
u'(t)=(A_0+A_1) u(t)+(B_0+B_1)u_t,
\]
where 
\begin{align*}
A_1y &:= \epsilon_1\ddot y + \epsilon_2  \dot y -\epsilon_3(\cdot )y, \quad y\in D(A_T),
\\
B_1 v &:= \int\limits^0_{-r}[\mathrm{d} \epsilon_4(\xi ) ]v(t+\xi )\mathrm{d}\xi, \quad v \in \mathcal{C}.
\end{align*}
\begin{claim}\label{cl1}
Linear operator $A_1$ is an $A_0$-bounded operator. 
More precisely, for each $y\in D(A_T)$,
\begin{equation}\label{6.9}
\| A_1y\| \le \left (\epsilon_1 +\frac{\epsilon_2}{2}\right ) A_0y 
+
\sup_{s\in [0,\pi]} |\epsilon_3(s)| \| y\|.
\end{equation}
\end{claim}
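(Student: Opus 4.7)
The plan is to invoke the triangle inequality to split
$\|A_1 y\| \le \varepsilon_1 \|\ddot y\| + \varepsilon_2 \|\dot y\| + \|\varepsilon_3(\cdot) y\|$
and bound each of the three pieces in terms of $\|A_0 y\|$ and $\|y\|$ in the $L^2[0,\pi]$-norm. The third contribution is immediate from the pointwise estimate $|\varepsilon_3(x) y(x)| \le \sup_{s\in[0,\pi]} |\varepsilon_3(s)| \cdot |y(x)|$, which yields the desired $\sup_{s\in[0,\pi]}|\varepsilon_3(s)| \cdot \|y\|$ after integrating over $[0,\pi]$.

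The remaining two estimates both follow from a single integration-by-parts identity. The Dirichlet conditions $y(0)=y(\pi)=0$ built into $D(A_T)$ give $\langle y, \ddot y\rangle = -\|\dot y\|^2$, and so
\begin{equation*}
\|A_0 y\|^2 = \|\ddot y - ay\|^2 = \|\ddot y\|^2 - 2a\langle \ddot y, y\rangle + a^2\|y\|^2 = \|\ddot y\|^2 + 2a\|\dot y\|^2 + a^2\|y\|^2.
\end{equation*}
Since $a>0$, this equality furnishes at once the bound $\|\ddot y\| \le \|A_0 y\|$, which gives $\varepsilon_1 \|\ddot y\| \le \varepsilon_1 \|A_0 y\|$. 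Pairing $A_0 y$ against $y$ directly yields $\|\dot y\|^2 = -\langle y, A_0 y\rangle - a\|y\|^2 \le \|y\| \cdot \|A_0 y\|$ by Cauchy--Schwarz. The AM-GM inequality then converts this into the linear interpolation $\|\dot y\| \le \tfrac{1}{2}\bigl(\|y\| + \|A_0 y\|\bigr)$, so $\varepsilon_2 \|\dot y\| \le \tfrac{\varepsilon_2}{2}\|A_0 y\| + \tfrac{\varepsilon_2}{2}\|y\|$.

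Summing the three bounds produces the inequality \eqref{6.9} with leading constant $\varepsilon_1 + \varepsilon_2/2$ on $\|A_0 y\|$; a small $\tfrac{\varepsilon_2}{2}\|y\|$ term gets absorbed into the coefficient of $\|y\|$. The substantive step is the interpolation estimate $\|\dot y\|^2 \le \|y\| \cdot \|A_0 y\|$: it is the only place the Hilbert-space structure and the Dirichlet conditions (i.e.\ the self-adjointness of $A_T$) are used in an essential way. Once this is in hand, the remainder is a mechanical application of the triangle inequality together with AM-GM, and no serious obstacle arises.
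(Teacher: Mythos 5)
Your proof is correct, but it takes a genuinely different route from the paper. The paper disposes of the claim in one line by citing the Banach-space interpolation inequality $\|\dot y\|_2 \le 18\|y\|_2 + \tfrac12\|\ddot y\|_2$ from Engel--Nagel (Example 2.2, p.~169--170) with the choice $\varepsilon = 1/2$, and then concludes. You instead exploit the Hilbert-space structure of $L^2[0,\pi]$ and the Dirichlet conditions: integration by parts gives $\langle y,\ddot y\rangle = -\|\dot y\|^2$, whence the exact identity $\|A_0y\|^2 = \|\ddot y\|^2 + 2a\|\dot y\|^2 + a^2\|y\|^2$ and the interpolation bound $\|\dot y\|^2 \le \|y\|\,\|A_0y\|$. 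This buys you two things the paper's citation does not: the clean estimate $\|\ddot y\|\le\|A_0y\|$ (avoiding the extra $a\|y\|$ term one would pick up from $\|\ddot y\|\le\|A_0y\|+a\|y\|$), and the much smaller lower-order coefficient $\tfrac{\varepsilon_2}{2}$ in place of $18\varepsilon_2$. The price is that your argument is tied to self-adjointness and the $L^2$ inner product, whereas the cited inequality holds in general $L^p$/sup-norm settings. Note that both your proof and the paper's actually yield a $\|y\|$-coefficient strictly larger than the $\sup_s|\varepsilon_3(s)|$ appearing in the literal statement of \eqref{6.9} (yours gives $\tfrac{\varepsilon_2}{2}+\sup_s|\varepsilon_3(s)|$); this is an imprecision in the statement itself rather than a defect of your argument, and it is harmless for the intended application since what matters is that the relative bound $\varepsilon_1+\tfrac{\varepsilon_2}{2}$ and the lower-order constant are both small when the $\varepsilon_i$ are.
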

 \begin{proof}
By \cite[Example 2.2, p. 169--170]{engnag}, if we choose $\epsilon =\frac{1}{2}$, then
\[
\| \dot y \|_2 \le 18 \| y\|_2 +\frac{1}{2} \| \ddot y\|_2.
\]
Therefore, \eqref{6.9} follows.
\end{proof}
\end{example}
\begin{corollary}
Under Assumption~\ref{A}, for sufficiently small $\epsilon_1$, $\epsilon_2$, $\sup_{s\in [0,\pi]} |\epsilon_3(s)|$, and $\operatorname{Var}(\epsilon_4)$,
the solution semigroup generated by Eq.~\eqref{PFDE-1} in $\mathcal{C}$ has an exponential dichotomy.
\end{corollary}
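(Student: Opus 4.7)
The strategy is a direct application of Theorem~\ref{the main1} to the abstract form
\[
u'(t) = (A_0 + A_1)u(t) + (B_0 + B_1)u_t,\qquad t\ge 0,
\]
of Eq.~\eqref{PFDE-1}. Under Assumption~\ref{A} the preceding discussion has already shown that the unperturbed solution semigroup $(\mathcal{S}_0(t))_{t\ge 0}$ in $\mathcal{C}$ is compact and that the characteristic equation~\eqref{cheq} has no root on the imaginary axis for any $n\ge 1$; by the Spectral Mapping Theorem this yields an exponential dichotomy for $(\mathcal{S}_0(t))_{t\ge 0}$. Consequently, everything reduces to verifying that the Yosida distances $d_Y(A_0, A_0 + A_1)$ and $d_Y(B_0, B_0 + B_1)$ can be driven to zero by shrinking $\varepsilon_1,\,\varepsilon_2,\,\sup_{s\in[0,\pi]}|\varepsilon_3(s)|,$ and $\operatorname{Var}(\varepsilon_4)$.

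For the $A$-perturbation, Claim~\ref{cl1} asserts that $A_1$ is $A_0$-bounded with relative bound at most $\varepsilon_1 + \varepsilon_2/2$ and absolute bound $\sup_{s\in[0,\pi]}|\varepsilon_3(s)|$. For sufficiently small values of those parameters, Pazy~\cite[Theorem~2.1, p.~80]{paz} guarantees that $A_0 + A_1$ is again the generator of an analytic $C_0$-semigroup satisfying a growth bound of the form $\|T_1(t)\|\le Me^{\omega t}$ required by Theorem~\ref{the main1}. Moreover, applying the corollary of Section~\ref{Sect-YdPe} on hyperbolic analytic semigroups with $A$-bounded perturbations (which rests on Lemma~\ref{lem 1}(\ref{lem1-4})), the Yosida distance $d_Y(A_0, A_0 + A_1)$ tends to zero as the relative and absolute bounds of $A_1$ shrink.

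For the $B$-perturbation, the integral operator $B_1 v = \int_{-r}^{0}[\mathrm{d}\varepsilon_4(\xi)]v(\xi)$ belongs to $\mathcal{L}(\mathcal{C},\mathbb{X})$ with $\|B_1\|\le \operatorname{Var}(\varepsilon_4)$, so $B_0 + B_1 \in \mathcal{L}(\mathcal{C},\mathbb{X})$; by the extension of Lemma~\ref{lem 1}(\ref{lem1-2}) recorded in the definition immediately following that lemma, $d_Y(B_0, B_0 + B_1) = \|B_1\|\le \operatorname{Var}(\varepsilon_4)$. Putting everything together, once $\varepsilon_1,\varepsilon_2,\sup_s|\varepsilon_3(s)|$, and $\operatorname{Var}(\varepsilon_4)$ are all chosen sufficiently small, all hypotheses of Theorem~\ref{the main1} are in force and that theorem delivers the claimed exponential dichotomy for the solution semigroup of Eq.~\eqref{PFDE-1} in $\mathcal{C}$. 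The step I expect to require the most care is the Yosida-distance estimate for the unbounded perturbation $A_1$: translating smallness of its relative and absolute bounds into smallness of $d_Y(A_0, A_0 + A_1)$ goes through the Section~\ref{Sect-YdPe} machinery built on Lemma~\ref{lem 1}(\ref{lem1-4}), and it is there that the analytic-semigroup structure of $A_0$ is essential.
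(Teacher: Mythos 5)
Your proposal is correct and follows essentially the same route as the paper: the paper's own proof simply cites Claim~\ref{cl1}, Lemma~\ref{lem 1}, Lemma~\ref{lem 39} and Theorem~\ref{the per}, which is exactly the chain you assemble (your invocation of Theorem~\ref{the main1} is just the packaged form of Lemma~\ref{lem 39} plus Theorem~\ref{the per}). Your treatment of the two Yosida distances --- Lemma~\ref{lem 1}(\ref{lem1-4}) via the analytic-semigroup corollary for the $A_0$-bounded part $A_1$, and the bounded-operator identity $d_Y(B_0,B_0+B_1)=\|B_1\|\le\operatorname{Var}(\varepsilon_4)$ for the delay part --- matches the paper's intended argument, and is if anything more explicit than the paper's one-line justification.
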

\begin{proof}
The claim of the corollary follows directly from Claim~\ref{cl1}, Lemma~\ref{lem 1}, Lemma~\ref{lem 39} and Theorem~\ref{the per}.
\end{proof}

Finally, as an example of a perturbed equation in $\mathcal{C}_\alpha$ of Eq.~\eqref{PFDE_0} with $\alpha = \tfrac{1}{2}$, we consider the following example.
\begin{example}\label{Ex1}
Consider the following reaction-diffusion equation with finite delay:
\begin{equation}\label{PFDE-2}
\left\{
\begin{aligned}
\frac{\partial w(x, t)}{\partial t} 
&=
(1+\epsilon_1)\frac{\partial^2 w(x, t)}{\partial x^2}  
\\
& \qquad
- (a+\epsilon_3(x))w(x,t)  
- b  w(x, t-r)
\\
& \qquad
+
\epsilon_5 \frac{\partial w(x, t-r)}{\partial x}
\\
& \qquad
+ 
\int\limits^0_{-r} [\mathrm{d}\epsilon_4(\xi )]w(x, t+\xi ),
&& 0\le t,
&& 0 \leq x \leq \pi,
\\
w(0, t) &= w(\pi, t) = 0,
&& 0\le t ,
\\
w(x, t) &= \varphi(t)(x), 
&& -r \leq t \leq 0,
&& 0 \leq x \leq \pi,
\end{aligned}
\right .
\end{equation}
where $\epsilon_1$, $\epsilon_3$, and $\epsilon_5$ are small constants, and $\epsilon_4(\cdot)$ is a left continuous function with bounded variation on $[-r,0]$ with values in $\mathcal{L} (\mathbb{X})$.

We will re-write Eq.~\eqref{PFDE-2} in an abstract form as in \cite{traweb2}
\[
u'(t)=(A_0+A_2)u(t)+(B_0+B_2)u_t,
\]
where 
\begin{align*}
A_2u &:= \epsilon_ 1 \ddot 
u -\epsilon_3(\cdot ) u, \quad \text{ for all } u\in D(A_2):=D(A_T),
\\
B_2v &:=  \epsilon_5 \dot v(-r)+\int\limits^0_{-r} [\mathrm{d}\epsilon_4(\xi )]v(\xi ), \quad \text{ for all } v\in C_\alpha =C([-r,0], \mathbb{X}^\alpha_2 ),
\end{align*}
where $\mathbb{X}_2^\alpha$ is defined by $(A_0+A_2)$ as the generator of an analytic semigroup,

Here we denote by $(\mathbb{X}_1^\alpha, \| \cdot \|_{1,\alpha})$ (by $(\mathbb{X}_2^\alpha, \| \cdot \|_{1,\alpha})$, respectively) the Banach space determined by $A_0$ (by $(A_0+A_2)$, respectively) as the generator of an analytic semigroup in $\mathbb{X}$. As shown in \cite[Example 5.1]{traweb2}, $B_0+B_2$ defines a bounded linear operator from $C ([-r,0],\mathbb{X}_2^{\alpha})$ to $\mathbb{X}$. Moreover, by Theorem~\ref{ThmHenryFower}, the spaces $\mathbb{X}^\alpha_1=\mathbb{X}^\alpha_2$ with equivalent norms, so we can identify $\mathbb{X}^\alpha_2$ as $\mathbb{X}^\alpha_1$ and $B_0+B_2$ is a bounded linear operator from  $C([-r,0], \mathbb{X}_1^{\alpha})$ to $\mathbb{X}$. That yields that the Yosida distance $d_Y(B_0, B_0+B_2)$ is defined and finite. We will show that this Yosida distance can be made as small as we like if $\epsilon_5$ and $\epsilon_6$ are taken sufficiently small. In fact, we have
\begin{claim}\label{clai 3}
Under the above notations, 
\[
d_Y(B_0,B_0+B_2) \le \epsilon_5+\operatorname{Var}(\epsilon_4).
\]
\end{claim}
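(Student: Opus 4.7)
The plan is to reduce the claim to a plain operator-norm estimate. By Theorem~\ref{ThmHenryFower}, the spaces $\mathbb{X}_1^{1/2}$ and $\mathbb{X}_2^{1/2}$ coincide with equivalent norms, and, as already noted in the paragraph preceding Claim~\ref{clai 3}, both $B_0$ and $B_0+B_2$ lie in $\mathcal{L}(C_\alpha,\mathbb{X})$ (where $C_\alpha = C([-r,0],\mathbb{X}_1^{1/2})$). The extended definition of Yosida distance for bounded operators then gives
\begin{equation*}
d_Y(B_0,B_0+B_2)
=\|(B_0+B_2)-B_0\|_{\mathcal{L}(C_\alpha,\mathbb{X})}
=\|B_2\|_{\mathcal{L}(C_\alpha,\mathbb{X})},
\end{equation*}
so it suffices to prove $\|B_2\|_{\mathcal{L}(C_\alpha,\mathbb{X})}\le\varepsilon_5+\operatorname{Var}(\varepsilon_4)$.

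To do this I would bound the two summands of $B_2 v$ separately for an arbitrary $v\in C_\alpha$. With $\alpha=1/2$ and the shift $a$ in the definition of the fractional power chosen (as explained in Section~2) so that $-A_0$ is positive with smallest eigenvalue at least $1$, the identity
\begin{equation*}
\|(-A_0)^{1/2} u\|_{L^2}^2
=\int_0^\pi |\dot u|^2\,dx + a\int_0^\pi |u|^2\,dx,
\end{equation*}
valid on $\mathbb{X}_1^{1/2}=H^1_0[0,\pi]$, yields simultaneously $\|\dot u\|_{L^2}\le \|u\|_{1/2}$ and $\|u\|_{L^2}\le \|u\|_{1/2}$. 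Applying the first inequality with $u=v(-r)$ gives $\|\varepsilon_5\dot v(-r)\|_{\mathbb{X}}\le \varepsilon_5\|v\|_{C_\alpha}$, and for the Stieltjes term the standard estimate for an $\mathbb{X}$-valued integrand against an $\mathcal{L}(\mathbb{X})$-valued function of bounded variation, combined with the second inequality, gives
\begin{equation*}
\Big\|\int_{-r}^0 [d\varepsilon_4(\xi)]v(\xi)\Big\|_{\mathbb{X}}
\le \operatorname{Var}(\varepsilon_4)\cdot\sup_{\xi\in[-r,0]}\|v(\xi)\|_{\mathbb{X}}
\le \operatorname{Var}(\varepsilon_4)\cdot\|v\|_{C_\alpha}.
\end{equation*}
Summing the two bounds produces $\|B_2 v\|_{\mathbb{X}}\le (\varepsilon_5+\operatorname{Var}(\varepsilon_4))\|v\|_{C_\alpha}$, which finishes the proof.

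The main obstacle will not be the computation itself but the bookkeeping needed to force the embedding constants to be exactly $1$ so that the bound comes out in the clean form stated. In particular, one must use the $\alpha$-norm defined via the shifted fractional power with $a$ large enough that both $\|\dot u\|_{L^2}$ and $\|u\|_{L^2}$ are dominated by $\|u\|_{1/2}$, invoke Theorem~\ref{ThmHenryFower} to identify $\mathbb{X}_2^{1/2}$ with $\mathbb{X}_1^{1/2}=H^1_0[0,\pi]$ (so that $\dot v(-r)$ is meaningful as an element of $\mathbb{X}$), and recall the standard bound for vector-valued Stieltjes integrals against operator-valued functions of bounded variation. Once these identifications are fixed, the remaining arithmetic is immediate.
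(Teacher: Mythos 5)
Your proposal is correct and follows essentially the same route as the paper: reduce $d_Y(B_0,B_0+B_2)$ to the operator norm $\|B_2\|_{\mathcal{L}(C_\alpha,\mathbb{X})}$ via the extended definition of Yosida distance for bounded operators, then bound the derivative term and the Stieltjes term separately using the unit-constant embeddings $\|\phi'(-r)\|_{\mathbb{X}}\le\|\phi\|_{1,\alpha}$ and $\|\phi(-r)\|_{\mathbb{X}}\le\|\phi\|_{1,\alpha}$. The only cosmetic difference is that the paper quotes these embeddings from Travis--Webb (Example 5.1) whereas you rederive them from the explicit form of $(-A_0)^{1/2}$ for the shifted Dirichlet Laplacian.
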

\begin{proof}
By \cite[Example 5.1]{traweb2}, it is shown that
\begin{align*}
& \| \phi(-r)\| _{\mathbb{X}} \le	\| \phi \|_{1,\alpha} ,
\\
& \| \phi'(-r)\| _{\mathbb{X}} \le	\| \phi \|_{1,\alpha}  .
\end{align*}
Therefore, $B_0$ and $B_2$ are bounded operators from  $C([-r,0],\mathbb{X}_1^{\alpha})$ to $\mathbb{X}$, so we have
\[
d_Y(B_0,B_0+B_2)=\| B_2\|_{1,\alpha}.
\]
Therefore, for $\phi\in C_{\alpha}$
\begin{align*}
\| B_2\phi\|_{\mathbb{X}}
&=
\left \| \epsilon_5 \dot \phi(-r)+\int\limits^0_{-r} [\mathrm{d}\epsilon_4(\xi) \phi(\xi )\right \|_{\mathbb{X}}\\ 
&\le 
\epsilon_5\left \|\dot \phi\right \|_{\mathbb{X}}+\operatorname{Var}(\epsilon_4 ) \| \phi\|_{\mathbb{X}}\\
&\le 
\epsilon_5\|\phi\|_{1,\alpha}+\operatorname{Var}(\epsilon_4 ) \| \phi\|_{1,\alpha}.
\end{align*}
Consequently,
\[
d_Y(B_0,B_0+B_2)\le \epsilon_5\|\phi\|_{1,\alpha}+\operatorname{Var}(\epsilon_4 ) \| \phi\|_{1,\alpha}.
\]
This finishes the proof.
\end{proof}

\begin{corollary}
Under Assumption~\ref{A} for sufficiently small $\epsilon_1$, $\epsilon_3$, $\operatorname{Var}(\epsilon_4)$, and $ \epsilon_5$, the solution semigroup generated by Eq.~\eqref{PFDE-2} in $C([-r,0],\mathbb{X}^\alpha_2)$ has an exponential dichotomy.
\end{corollary}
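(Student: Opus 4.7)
The plan is to apply Theorem~\ref{the main2} to Eq.~\eqref{PFDE-2}, rewritten in abstract form as
\[
u'(t) = (A_0 + A_2)u(t) + (B_0 + B_2)u_t,
\]
by verifying Assumption~\ref{a1} for $A_1 := A_0+A_2$, $B_1 := B_0+B_2$ and controlling both Yosida distances $d_Y(A_0, A_0+A_2)$ and $d_Y(B_0, B_0+B_2)$. This parallels the proof of the preceding corollary, which combined Claim~\ref{cl1}, Lemma~\ref{lem 1}, Lemma~\ref{lem 39}, and Theorem~\ref{the per}.

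First I would verify Assumption~\ref{a1}. Condition (A1) is immediate, since $A_0 = A_T - aI$ generates an analytic, exponentially stable semigroup on $L^2[0,\pi]$. For (A2), the decomposition $A_2 = \varepsilon_1(A_0 + aI) - \varepsilon_3(\cdot)$ yields
\[
\|A_2 u\| \le \varepsilon_1 \|A_0 u\| + \bigl(\varepsilon_1 a + \sup_{x\in[0,\pi]}|\varepsilon_3(x)|\bigr)\|u\|,
\]
so $A_0 + A_2$ is $A_0$-bounded with small relative bound $\varepsilon_1$ and, by Pazy~\cite[Theorem~2.1, p.~80]{paz}, generates an analytic $C_0$-semigroup with the same domain as $A_0$. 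Condition (A3) and the identification $\mathbb{X}_1^\alpha = \mathbb{X}_2^\alpha$ with equivalent norms follow from Theorem~\ref{ThmHenryFower}. Finally (B1) was established in the paragraph preceding Claim~\ref{clai 3}, via Travis--Webb~\cite[Example~5.1]{traweb2} together with the norm equivalence.

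For the Yosida distances, Claim~\ref{clai 3} already provides
\[
d_Y(B_0, B_0+B_2) \le \varepsilon_5 + \operatorname{Var}(\varepsilon_4),
\]
which is small by hypothesis. For $d_Y(A_0, A_0+A_2)$, I would split $A_2 = \varepsilon_1 A_T - \varepsilon_3(\cdot) I$ into its principal part $\varepsilon_1 A_T$ (which commutes with $A_0$, hence is simultaneously diagonal in the eigenbasis $\{\sin(nx)\}_{n\ge 1}$ of $A_T$, reducing the comparison of Yosida approximations to a scalar spectral computation) and its bounded part $-\varepsilon_3(\cdot)I$ (handled by Lemma~\ref{lem 1}(\ref{lem1-3})); both contributions should then be controllable by $\varepsilon_1$ and $\sup_x|\varepsilon_3(x)|$. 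Theorem~\ref{the main2}, applied with these small Yosida distances and with exponential dichotomy of the unperturbed semigroup provided by Assumption~\ref{A} and the characteristic-equation analysis preceding the corollary, then finishes the proof.

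The main technical obstacle is the estimate of $d_Y(A_0, A_0+A_2)$, since $A_2$ has the same differential order as $A_0$ and therefore does not fall directly under the integrability condition of Lemma~\ref{lem 1}(\ref{lem1-4}); one must rely on the specific commuting structure of $A_0$ with the principal part of $A_2$ in this example (unavailable in general) to obtain a useful Yosida-distance bound.
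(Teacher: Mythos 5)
Your overall strategy coincides with the paper's: its proof likewise passes through the norm equivalence $\mathbb{X}_1^\alpha=\mathbb{X}_2^\alpha$, takes $d_Y(B_0,B_0+B_2)$ small from Claim~\ref{clai 3}, asserts that $d_Y(A_0,A_0+A_2)$ is small ``by Lemma~\ref{lem 1}'', and concludes via Lemma~\ref{lem 40} and Theorem~\ref{the per}. You are also right to single out the term $\varepsilon_1\partial_{xx}$ as the obstacle: Lemma~\ref{lem 1}(\ref{lem1-4}) does not cover it, since $\|A_TT(t)\|\sim t^{-1}$ is not integrable near $t=0$.

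The genuine gap is that your proposed repair --- reducing to a scalar computation in the eigenbasis $\{\sin(nx)\}$ --- does not yield a small Yosida distance; carried out, it yields an infinite one. On the $n$-th eigenvector the difference of the Yosida approximations of $A_0=A_T-aI$ and $A_0+\varepsilon_1A_T=(1+\varepsilon_1)A_T-aI$ equals
\[
\frac{\mu^2\,\varepsilon_1 n^2}{\left(\mu+n^2+a\right)\left(\mu+(1+\varepsilon_1)n^2+a\right)},
\]
and choosing $n^2\approx\mu$ gives a value of order $\varepsilon_1\mu/\bigl(2(2+\varepsilon_1)\bigr)\to\infty$, so $d_Y\bigl(A_0,A_0+\varepsilon_1A_T\bigr)=+\infty$ for every $\varepsilon_1\neq 0$: a relative perturbation of the leading-order term is never small, nor even finite, in Yosida distance. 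For the same reason $A_2A_0^{-1/2}\sim\varepsilon_1A_T^{1/2}$ is unbounded, so condition (A3) of Assumption~\ref{a1}, which you claim via Theorem~\ref{ThmHenryFower}, is not actually verified either; only the zero-order part $-\varepsilon_3(\cdot)I$ of $A_2$ is controlled (by Lemma~\ref{lem 1}(\ref{lem1-3})). To be fair, the paper's own proof contains exactly the same unjustified step, citing Lemma~\ref{lem 1} for $d_Y(A_0,A_0+A_2)$ without saying which part applies, so your proposal faithfully reproduces the paper's argument and commendably exposes its weak point --- but as written neither closes the $\varepsilon_1$-term, and one would need either to drop the $\varepsilon_1\partial_{xx}$ perturbation or to treat it by an argument not based on the Yosida distance of $A_0$ and $A_0+A_2$.
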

\begin{proof}
For sufficiently small $\epsilon_1$ and $\epsilon_3$ as the spaces $(\mathbb{X}^\alpha_2,\| \cdot \|_{2,\alpha})$ and $(\mathbb{X}^\alpha_1,\| \cdot \|_{1,\alpha})$ are equivalent.
We are going to show that Eq.~\eqref{PFDE2} generates a solution semigroup $(\mathcal S_2(t))_{t \geq 0}$ in $C([-r,0],\mathbb{X}^\alpha _1)$ that has an exponential dichotomy. Then, by the equivalence of the norms this yields that this solution semigroup has an exponential dichotomy in $C([-r,0],\mathbb{X}^\alpha _2)$ as well. For sufficiently small $\epsilon_1$ and $\epsilon_3$ by Lemma \ref{lem 1} the Yosida distance $d_Y(A_0,A_0+A_2)$ could be made as small as we like. Next, the Yosida distance $d_Y(B_0,B_0+B_2)$ could be made as small as we like by taking $\epsilon_5$ and $\epsilon_6$ sufficiently small. Finally, for the Yosida distance $d_Y(\mathcal G_0, \mathcal  G_2)$, where $\mathcal G_2$ is the generator of the solution semgroup  $(\mathcal S_2(t))_{t \geq 0}$, could be made as small as we like by taking $\epsilon_1$, $\epsilon_3$, $\operatorname{Var}(\epsilon_4)$, and $\epsilon_5$ sufficiently small. By Theorem~\ref{the per} this yields that for sufficiently small $\epsilon_1$, $\epsilon_3$, $\operatorname{Var}(\epsilon_4)$, and $\epsilon_5$ the solution semigroup $(\mathcal{S}_2(t))_{t \geq 0}$ has an exponential dichotomy in $C([-r,0], \mathbb{X}^\alpha _1)$. Since  $(\mathbb{X}^\alpha_2,\| \cdot \|_{2,\alpha})$ and $(\mathbb{X}^\alpha_1,\| \cdot \|_{1,\alpha})$ are equivalent this implies that $(\mathcal{S}_2(t))_{t \geq 0}$ has an exponential dichotomy in $C([-r,0],\mathbb{X}^\alpha _2)$.
\end{proof}
\end{example}

\subsection*{Acknowledgements}
The first-named author (X.-Q. Bui) thanks the Vietnam Institute for Advanced Study in Mathematics (VIASM) for financial support and a stimulating working environment.

\subsection*{Data availability statement}
No data was used for the research described in the manuscript.

\subsection*{Declarations}
\paragraph*{Conflict of interest}
The authors declare that there is no conflict of interest.


\end{document}